\newcommand{\revise}[1]{{{\color{black} #1}}}
\newcommand{\secondrevise}[1]{{{\color{black} #1}}}
\newcommand{\mbfl}{\mathbf l}
\newcommand{\mL}{\mathcal L} 
\newcommand{\mB}{\mathcal B}
\newcommand{\mG}{\mathcal G}
\newcommand{\dom}{{\rm{dom}\,}}
\newcommand{\mbbR}{\mathbb R} 
\DeclareMathOperator*{\argmin}{argmin} 
\DeclareMathOperator*{\minimize}{minimize} 
\newtheorem{assumption}{Assumption}
\begin{document}

\title{An inertial ADMM for a class of nonconvex composite optimization with nonlinear coupling constraints} 
\titlerunning{ADMM for nonconvex composite optimization}    
\author{Le Thi Khanh Hien         \and 
        Dimitri Papadimitriou 
}

\authorrunning{L. T. K. Hien, D. Papadimitriou} 

\institute{L. T. K. Hien finished this work when she was at Huawei Belgium Research Center,  
Leuven 3001, Belgium \\
D. Papadimitriou \at  FTNO,  Huawei Belgium Research Center,  
Leuven 3001, Belgium\\
              \email{khanhhiennt@gmail.com, dimitrios.papadimitriou.ext@huawei.com}  
}

\maketitle

\begin{abstract}
In this paper, we propose an inertial alternating direction method of multipliers for solving a class of non-convex multi-block optimization problems with \emph{nonlinear coupling constraints}.  Distinctive features of our proposed method, when compared with other alternating direction methods of multipliers for solving non-convex problems with nonlinear coupling constraints, include: (i) we apply the inertial technique to the update of primal variables and (ii) we apply a non-standard update rule for the multiplier by scaling the multiplier by a factor  before moving along the ascent direction where a relaxation parameter is allowed. Subsequential convergence and global convergence are presented for the proposed algorithm. 
\end{abstract}

\keywords{
alternating direction methods of multipliers \and nonlinear coupling constraints
\and
logistic matrix factorization
\and multiblock nonconvex optimization 
\and majorization minimization
}


\section{Introduction}
\label{sec:Intro}

\revise{
We consider the following composite problem with nonlinear coupling constraints 
\begin{equation}
\label{CP-main}
\begin{split}
\minimize_{x=(x_1,\ldots,x_s)\in\mbbR^n,y\in\mbbR^m} & \quad \Theta(x,y):= F(x) + \sum_{i=1}^s f_i(x_i)   + G(y) \\
\mbox{subject to} &\quad h(x)+ \mB y=0,
\end{split}
\end{equation}
where  $h(x)=(h_1(x),\ldots,h_q(x))$ is a mapping from $\mbbR^n$ to $\mbbR^q$, $h_i(x)$ for $i=1,\ldots,q$, are continuously differentiable functions, $f_i:\mbbR^n\to \mbbR\cup \{+\infty\}$ are proper lower semi-continuous functions, $F:\mbbR^n\to \mbbR$ is a  continuously differentiable function, $G$ is $L_G$-smooth (that is, $\nabla G$ is $L_G$-Lipschitz continuous), and $\mB$ is a linear mapping from $\mbbR^m$ to $\mbbR^q$. The following Assumption \ref{assump:h2} is necessary for our convergence analysis. 
 In Remark \ref{remark_1}, we will also consider a situation when Assumption~\ref{assump:h2}(i) can be removed. Note that Assumption~\ref{assump:h2}(ii) is typical in the literature of ADMM for nonconvex problem.
\begin{assumption}
\begin{itemize}
\item[(i)] \label{assump:h2} 
$x_i\mapsto h_2(x):=\frac12\|h(x)\|^2$ is $l_i(x_1,\ldots,x_{i-1},x_{i+1},\ldots,x_s)$-smooth, for $i= 1,\ldots,s$; that is, $\nabla_{x_i} h_2(x)$ is $l_i$-Lipschiz continuous.
\item[(ii)]  \label{assump:theta} 
 $\sigma_{\mB}:=\lambda_{\min}(\mB \mB^\top) >0$ ($\sigma_{\mB}$  denotes the smallest eigenvalue of $\mB \mB^\top$)  and $\Theta(x,y)$ is bounded from below by $\nu$. 
\end{itemize}
\end{assumption} }
  Let us provide some examples that satisfy Assumption \ref{assump:h2}(i).
 \begin{itemize} 
 \item 
 If $h(x)=\sum_{i=1}^s \mathcal A_i x_i$ ( $h$ is linear), then $x_i\mapsto \frac12\|h(x_i,y_{\ne i})\|^2$ is $\|\mathcal A_i^{\revise{\top}} \mathcal A_i\|$-smooth. 
 \item If $h(x_1,x_2)=x_1 x_2$ (bi-linear function), then $x_1\mapsto 
 \frac12\|h(x_1,x_2)\|^2$ is $\|x_2 x_2^\top\|$-smooth and $x_2\mapsto \frac12\|h(x_1,x_2)\|^2$ is $\|x_1^\top x_1\|$-smooth.
 \item If $h$ is a multilinear mapping then $h$ satisfies Assumption \ref{assump:h2}(i).  
 \end{itemize}
%

 Alternating direction methods of multipliers (ADMM) for solving Problem \eqref{CP-main} with \emph{linear} coupling constraints (that is when \revise{$h(x)$ is an affine mapping}) have been deeply studied in the literature, see e.g., \cite{Bot2020,Hien_iADMM,HongADMM2020,Wang2019} and the references therein. However, ADMM \revise{with convergence guarantee\footnote{We thank the reviewer for bringing to our attention the reference \cite{WANG20211_admm}, which explores the application of ADMM for addressing a nonconvex optimization problem involving nonlinear coupling constraints.  Nevertheless, it is noteworthy that the paper does not provide a convergence analysis. }} for solving \revise{nonconvex composite problem} with \emph{nonlinear} coupling constraints have only appeared in \cite{Bolte2018_ADMM}, \cite{Cohen2021} and \cite{mADMM2022}.  The authors in  \cite{Bolte2018_ADMM} consider \revise{Problem \eqref{CP-main} with a more general nonlinear coupling constraint, which replaces $\mathcal B y$ by a mapping $g(y)$ from $\mbbR^m$ to $\mbbR^q$}, and propose a universal framework to study global convergence analysis of \emph{Lagrangian sequences} (see \cite[Section 3.3]{Bolte2018_ADMM}). To establish the global convergence of the Lagrangian sequences, the notion of \emph{information zone} was introduced in \cite{Bolte2018_ADMM} and the boundedness of the multiplier sequence is the key assumption to fulfill the role of the information zone. The authors in \cite{mADMM2022}  develop the idea of 
 information zone of \cite{Bolte2018_ADMM} to propose mADMM - a multiblock alternating direction method of multipliers  - for solving the general problem with any $s\geq 1$. Although the convergence analysis presented in \cite{mADMM2022} does not use the property of the Lagrangian sequence proposed in \cite{Bolte2018_ADMM} to establish the global convergence of its generated sequence, it still relies on the boundedness assumption of the multiplier sequence (as a consequence of using the information zone). 
  \revise{To avoid this unrealistic assumption, the authors in \cite{Cohen2021} consider Problem \eqref{CP-main} (instead of the general nonlinear coupling constraints as in \cite{Bolte2018_ADMM}) with $s=1$.} They design a proximal linearized alternating direction method of multipliers that requires  a backtracking procedure to generate the proximal parameters. The convergence analysis of \cite{Cohen2021} does not require the boundedness of the multiplier sequence but the backtracking procedure used in \cite{Cohen2021} relies on the boundedness assumption of the generated sequence (see, \cite[Lemma 5.2]{Cohen2021}) to guarantee the boundedness of the proximal parameters; however, \cite{Cohen2021} does not present a  sufficient condition to guarantee the boundedness of the generated sequence.

\secondrevise{Utilizing inertial techniques has become a prevalent approach to numerically accelerate an algorithm, frequently resulting in improved convergence rates. } Let us provide a very brief review for inertial techniques. As far as we know,  \cite{POLYAK1964} is the first paper using an inertial technique that adds an inertial force, also known as a ``momentum", $\zeta^k(x^k-x^{k-1})$ (where $x^k$ is the current iterate, $x^{k-1}$ is the previous iterate, and $\zeta^k$ is an extrapolation parameter) 
 to the gradient direction to accelerate the gradient descent method. Later, Nesterov proposed his well-known 
accelerated fast gradient methods in the series of works  \cite{Nesterov1983,Nesterov1998,Nesterov2004,Nesterov2005}. Since the appearance of Nesterov's accelerated gradient methods, inertial techniques have been widely applied in convex as well as non-convex problems to accelerate the convergence of first-order methods, see e.g.,  \cite{Hien_iADMM,Hien_ICML2020,Titan2020,Ochs2019,Ochs2014,Pock2016,Xu2013,Xu2017,Zavriev1993} and the references therein.

In this paper, we employ the inertial technique proposed in \cite{Titan2020} to propose iADMMn - an inertial alternating direction method of multipliers for solving  Problem \eqref{CP-main}. 
 When $\mB  =-\mathcal I$, where $\mathcal I$ is an identity mapping, Problem \eqref{CP-main} is equivalent to
\begin{equation}
 \label{regularized_nlq}
 \minimize_{x\in \mathbb R^n}  F(x)  + \sum_{i=1}^s f_i(x_i) + G(h(x)).
 \end{equation}
 An example of \eqref{regularized_nlq} is the following logistic matrix factorization problem  \cite{Liu2016_PLOS}
\begin{equation}
    \label{log_MF}
    \min_{U,V} \sum_{i=1}^m \sum_{j=1}^n (1+c y_{ij}-y_{ij}) \log (1+\exp(u_i v_j^\top))-c y_{ij}u_i v_j^\top + \frac{\lambda_d}{2} \|U\|_F^2 + \frac{\lambda_t}{2}\|V\|_F^2, 
\end{equation}
where $Y=\mbbR^{m\times n}$ is a given data set with each element $y_{ij}\in \{0,1\}$, $\lambda_d$ and $\lambda_t$ are regularization parameters, 
and $c$ is some given constant. 
Problem \eqref{log_MF} has the form of Problem \eqref{regularized_nlq} with  $x=(U,V)$, $F(x)=\frac{\lambda_d}{2} \|U\|_F^2 + \frac{\lambda_t}{2}\|V\|_F^2$, $f_i=0$, $h(U,V)=UV$,  and $G(W)=\sum_{i,j} (1+c y_{ij}-y_{ij})  \log (1+\exp(W_{ij}))-c y_{ij} W_{ij}$. 
A few other examples of Problem~\eqref{regularized_nlq} are the PDE-constrained inverse problem  \cite{Roosta2014},  the risk parity portfolio selection problem \cite{Maillard2010}, the robust phase retrieval problem \cite{Duchi2018}, 
 the nonlinear regression problem \cite{Dutter1981,GoodBengCour16} ($h$ represents the model to train, $G$ is a loss function, and $ F(x)  + \sum_{i=1}^s f_i(x_i)$ is a regularizer), and the generative adversarial networks  \cite{mao2017squares}.
 
  It is worth noting that  inertial techniques have also been applied in \cite{Hien_iADMM} to accelerate the convergence of ADMM for solving Problem  \eqref{CP-main} with \revise{$h$ being a linear mapping}. We allow $h$ to be nonlinear in this paper.  On the other hand, iADMMn uses a non-standard update rule for the multiplier. More specifically, the multiplier is scaled by a factor before moving along the ascent direction, see \eqref{eq:omega_update}. This update rule for the multipliers is inspired by recent papers \cite{SunSun2021,YANG2022110551} which give new perspectives on the multiplier update of primal-dual methods in the non-convex setting. Specifically,  classical primal-dual methods are interpreted as methods that  alternatively update the primal variable  by minimizing the primal problem (primal descent) and update the dual (the multiplier)  by maximizing the dual problem (dual ascent), see \cite[Chapter 7]{Bertsekas}. However, when the primal problem is highly non-convex and may not be done in closed form, 
  the classic dual ascent step may lose its valid interpretation. Hence, it makes sense to consider some modifications for the dual update. For example, \cite{SunSun2021} proposes a scaled dual \emph{descent} update and \cite{YANG2022110551} imposes a discounting factor to the multiplier before moving along the ascent direction. The multiplier update  \eqref{eq:omega_update} of our iADMMn is similar to the multiplier update proposed in \cite{YANG2022110551} in the sense that it scales the multiplier by a factor $\tau_1$ before moving along the ascent direction, but iADMMn also allows a relaxation parameter $\tau_2$ in the step-size of updating the multiplier. 

The paper is organized as follows. In the next section, we provide some preliminaries on block surrogate functions, inertial techniques, and the augmented Lagrangian function. In Section \ref{sec:analysis}, we describe iADMMn and analyze its convergence properties. We conclude the paper in Section \ref{sec:conclusion}.

\paragraph{Notations} We denote $[s]= \{1,\ldots,s\}$. For $y\in \mbbR^s$, we use  $y_{\ne i}$ to denote $(y_1,\ldots,y_{i-1},y_{i+1},\ldots,y_s)$. For a given mapping $h:\mbbR^n\to \mbbR^s$, we use $\nabla h (x) \in \mbbR^{s\times n}$ to denote the Jacobian of $h$ at $x$, that is $ \nabla h(x)=[\nabla h_1(x) \ldots \nabla h_s(x)]^\top$, and we use $\nabla_{x_i} h(x)\in \mbbR^{s\times n_i}$ to denote the partial Jacobian of $h$ with respect to $x_i$, that is $\nabla_{x_i} h(x) = [\nabla_{x_i} h_1(x) \ldots \nabla_{x_i} h_s(x)]^\top$. For a given sequence $\{y^k\}$, we denote $\Delta y^k=y^{k}-y^{k-1}$.

\section{Preliminaries}
We refer the readers to \cite[Appendix 1]{Hien_iADMM} for some preliminaries of non-convex optimization such as the definition of limiting subdifferential and the definition of the K{\L} property. In the following, we give some preliminaries of block surrogate functions, inertial technique, augmented Lagrangian function, and $\varepsilon$-stationary point. 
\subsection{Block surrogate and nearly sufficiently decrease property}
\revise{In our upcoming analysis, we use the following definition for a surrogate function.} 
\begin{definition}[Block surrogate function]
\label{def:surrogate-block} Let $\mathcal X_i\subseteq \mathbb R^{ n_i}$, for $i=1,\ldots s$, and $\mathcal X\subseteq \mathbb R^{ n}$. 
A continuous function $u_i:\mathcal X_i \times \mathcal X \to \mathbb R  $ is called a block $x_i$ surrogate function of $\varphi:\mbbR^n\to \mbbR$ on $\mathcal X=\mathcal X_1\times\ldots\times \mathcal X_s$
if the following conditions are satisfied:
\begin{itemize}
    \item[(a)] $u_i(z_i,z) = \varphi(z)$ for all $z\in \mathcal X$, 
    \item[(b)] $u_i(x_i,z) \geq \varphi(x_i,z_{\ne i})$ for all $x_i\in\mathcal X_i$ and $z\in \mathcal X$,  where  
    $(x_i,z_{\ne i})$ denotes $(z_1,\ldots,z_{i-1},x_i,z_{i+1},\ldots,z_s),$ 
   \revise{ \item[(c)] the block approximation error is defined as $ e_i(x_i,z):=u_i(x_i,z) - \varphi(x_i,z_{\ne i})$, $x_i\mapsto e_i(x_i,z)$ is continuously differentiable and it satisfies  $\nabla_{x_i} e_i(x_i,x)=0 $ for all $x\in\mathcal X$.}
\end{itemize}

\end{definition}
\revise{For example,  when $\nabla_{x_i} \varphi(\cdot,z_{\ne i})$ is $L_i^{(z)}$-Lipschitz continuous (note that $L^{(z)}_i$ may depend on $z$), the Lipschitz gradient surrogate function~\cite{Mairal_ICML13,Xu2017,Hien_ICML2020}  has the form 
$
u_i(x_i,z) = \varphi(z) + \langle\nabla_i \varphi(z), x_i- z_i\rangle + \frac{\kappa_i L^{(z)}_i}{2}\|x_i - z_i\|^2,
$
where $\kappa_i\geq 1$.  Finding $\arg\min_{x_i} u_i(x_i,z)+ f_i(x_i)$, where $z$ plays the role of the current iterate,  would lead to the block proximal gradient step of block coordinate methods \cite{Beck2013,Bolte2014,Razaviyayn2013,Tseng2009} for solving $\min_x \varphi(x)+\sum_{i=1}^s f_i(x_i)$.} More examples such as a quadratic surrogate, a DC programming surrogate, a saddle point surrogate, and a Jensen surrogate can be found in \cite{Hien_iADMM,Titan2020,Mairal_ICML13}. \revise{Considering the optimization problem $\min_x \varphi(x)+\sum_{i=1}^s f_i(x_i)$, convergence analysis of various block coordinate methods corresponding to different choices of the surrogates can be unified by studying the convergence analysis of the block majorization minimization algorithm, which updates block $x_i$ by finding $\arg\min_{x_i} u_i(x_i,z)+ f_i(x_i)$, given $z$ being the current iterate \cite{Razaviyayn2013}. Furthermore, a suitable surrogate may lead to a closed-form solution for the update of block $x_i$ while alternatingly minimizing the objective function does not have a closed-form solution and requires an outer optimizer to solve the subproblem, see e.g., \cite[Section 6.2]{Titan2020}.}  

The authors in \cite{Titan2020} propose TITAN, an inertial block majorization minimization framework for solving non-convex composite optimization problems \revise{without coupling constraints}. TITAN updates one block of variables at a time by minimizing an inertial block surrogate function that is formed by adding an inertial force to a block surrogate function of the objective. The key property to establish the convergence of TITAN is the following nearly sufficiently decreasing property (NSDP), which holds when an inertial block surrogate is minimized (that is when  \eqref{inertial_general} is performed). \revise{We will use the NSDP for the convergence analysis of our proposed method.} 
 \begin{proposition}\cite[Theorem 3]{Titan2020}
\label{prop:NSDP-titan}
Let $\Phi:\mbbR^n\to\mbbR$ be a lower semi-continuous function and $f_i:\mbbR^n\to \mbbR\cup \{+\infty\}$ be proper lower semi-continuous functions. Denote $x^{k,0} = x^k$, 
$x^{k,i}=(x^{k+1}_1, \ldots,x^{k+1}_{i},x^{k}_{i+1},\ldots,x^{k}_s)$ for $i \in [s]$, $ 
x^{k+1} = x^{k,s}
$, and $\Psi(x):=\Phi(x) + \sum_{i=1}^s f_i(x_i)$.  Suppose $\mathcal G^k_i: \mathbb R^{ n_i} \times \mathbb R^{ n_i} \to \mathbb R^{ n_i}$ be some extrapolation operator that satisfies \revise{$\|\mathcal G^k_i(x^{k}_i, x^{k-1}_i)\|\leq a_i^k\|x^{k}_i - x^{k-1}_i\|$} for some $a_i^k$. Let  $u_i(x_i,z)$ be a block $x_i$ surrogate function of $\Phi(x)$ and 
\begin{equation}
  \label{inertial_general} 
  x_i^{k+1}=\argmin_{x_i} u_i(x_i,x^{k,i-1}) + f_i(x_i)- \langle  \mathcal G^k_i(x^{k}_i, x^{k-1}_i),x_i\rangle.
  \end{equation}
    \revise{If} one of the following conditions holds:
 \begin{itemize}
 \item[(i)] $x_i\mapsto u_i(x_i,z) + f_i(x_i)$ is $\rho_i$-strongly convex,
 \item[(ii)] the approximation error $ e_i(x_i,z):=u_i(x_i,z)-\Phi(x_i,z_{\ne i}) $  satisfying $$e_i(x_i,z)\geq \frac{\rho_i(z)}{2} \|x_i-z_i\|^2   \mbox{for all} \,\, x_i,$$ 
\end{itemize}  
(note that $\rho_i(z)$ may depend on $z$), \revise{then} we have the following NSDP
  \begin{equation}
  \label{NSDP-titan}
  \Psi(x^{k,i-1}) + \gamma_i^k \|x_i^k-x_i^{k-1} \|^2 \geq \Psi(x^{k,i}) + \eta_i^k \|x_i^{k+1}-x_i^{k} \|^2,
 \end{equation}
where 
$$
\begin{array}{ll}
\gamma^{k}_i=\frac{(a^k_i)^2}{2\nu \rho_i^{k} } , \qquad\eta^{k}_i = \frac{(1-\nu)\rho_i^{k}}{2},
\end{array}
$$ 
 $\rho_i^{k}=\rho_i(x^{k,i-1})$, and $\nu\in (0,1)$ is a constant. If we do not apply extrapolation, that is $a_i^k=0$, then \eqref{NSDP-titan} is satisfied with $\gamma_i^k=0$ and $\eta_i^k = \rho^k_i/2$. 
\end{proposition}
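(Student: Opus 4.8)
The plan is to prove \eqref{NSDP-titan} one block at a time, reducing it to the one-block optimality of the update \eqref{inertial_general}. Fix $k$ and $i$, abbreviate $z=x^{k,i-1}$ (so $z_i=x_i^k$), and write $\mathcal G=\mathcal G^k_i(x^k_i,x^{k-1}_i)$ and $\Delta=x_i^{k+1}-x_i^k$. The first observation is that $x^{k,i-1}$ and $x^{k,i}$ differ only in their $i$-th block, so every $f_j$ with $j\ne i$ takes the same value at both points; hence \eqref{NSDP-titan} is equivalent to the single-block inequality
$$\Phi(x^{k,i-1})+f_i(x_i^k)+\gamma_i^k\|x_i^k-x_i^{k-1}\|^2\;\geq\;\Phi(x^{k,i})+f_i(x_i^{k+1})+\eta_i^k\|\Delta\|^2,$$
obtained by discarding the shared terms $\sum_{j\ne i}f_j$ from $\Psi$. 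Keeping track of this cancellation is the one bookkeeping point that must not be skipped, since it is what lets the multi-block statement collapse to a scalar comparison.

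Next I would derive the core one-block estimate, treating the two hypotheses separately only at the very first step. Under (i), $x_i\mapsto u_i(x_i,z)+f_i(x_i)-\langle\mathcal G,x_i\rangle$ is $\rho_i^k$-strongly convex (the linear term does not affect this), so comparing its minimizer $x_i^{k+1}$ to the point $z_i=x_i^k$ gives
$$u_i(x_i^k,z)+f_i(x_i^k)-\langle\mathcal G,x_i^k\rangle\;\geq\;u_i(x_i^{k+1},z)+f_i(x_i^{k+1})-\langle\mathcal G,x_i^{k+1}\rangle+\tfrac{\rho_i^k}{2}\|\Delta\|^2.$$
Under (ii) I would instead use only that $x_i^{k+1}$ minimizes the objective in \eqref{inertial_general} (its value is no larger at $x_i^{k+1}$ than at $x_i^k$) and manufacture the same quadratic term from the error bound $e_i(x_i^{k+1},z)\geq\tfrac{\rho_i^k}{2}\|\Delta\|^2$, arriving at the identical inequality. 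In both cases I then use surrogate property (a) to replace $u_i(x_i^k,z)$ by $\Phi(x^{k,i-1})$ and property (b) to bound $u_i(x_i^{k+1},z)\geq\Phi(x^{k,i})$, which turns the surrogate values into the actual objective at the two consecutive block points and, after rearrangement, yields
$$\Phi(x^{k,i-1})+f_i(x_i^k)\;\geq\;\Phi(x^{k,i})+f_i(x_i^{k+1})+\tfrac{\rho_i^k}{2}\|\Delta\|^2+\langle\mathcal G,x_i^k-x_i^{k+1}\rangle.$$

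The remaining work is to absorb the inertial inner product. I would bound it below by $\langle\mathcal G,x_i^k-x_i^{k+1}\rangle\geq-\|\mathcal G\|\,\|\Delta\|\geq-a_i^k\|x_i^k-x_i^{k-1}\|\,\|\Delta\|$ using Cauchy--Schwarz together with the hypothesis $\|\mathcal G\|\leq a_i^k\|x_i^k-x_i^{k-1}\|$, and then split the product by Young's inequality with weight $\nu\rho_i^k$,
$$a_i^k\|x_i^k-x_i^{k-1}\|\,\|\Delta\|\;\leq\;\frac{(a_i^k)^2}{2\nu\rho_i^k}\|x_i^k-x_i^{k-1}\|^2+\frac{\nu\rho_i^k}{2}\|\Delta\|^2.$$
Substituting this bound, the $\gamma_i^k\|x_i^k-x_i^{k-1}\|^2$ term added to the left absorbs the $\tfrac{(a_i^k)^2}{2\nu\rho_i^k}\|x_i^k-x_i^{k-1}\|^2$ contribution, while the $\tfrac{\nu\rho_i^k}{2}\|\Delta\|^2$ contribution exactly accounts for the reduction of the quadratic on the right from $\tfrac{\rho_i^k}{2}$ to $\eta_i^k=\tfrac{(1-\nu)\rho_i^k}{2}$; this gives the single-block inequality, and re-adding $\sum_{j\ne i}f_j$ restores $\Psi$ and yields \eqref{NSDP-titan}. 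When $a_i^k=0$ no Young step is needed: the inner product vanishes and the full $\tfrac{\rho_i^k}{2}\|\Delta\|^2$ survives, giving $\gamma_i^k=0$ and $\eta_i^k=\rho_i^k/2$. The only genuinely delicate point is selecting the Young weight so that the two generated constants match $\gamma_i^k$ and $\eta_i^k$ precisely; everything else is the surrogate bookkeeping described above.
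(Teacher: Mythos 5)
Your proof is correct: the reduction to a single-block inequality, the use of the minimizer property combined with strong convexity (case (i)) or the error lower bound (case (ii)), the surrogate properties (a)--(b), and the Cauchy--Schwarz/Young step with weight $\nu\rho_i^k$ reproduce exactly the constants $\gamma_i^k$ and $\eta_i^k$ in the statement. The paper itself states this proposition without proof, citing \cite[Theorem 3]{Titan2020}, and your argument is essentially the same as the proof given there, so there is nothing further to reconcile.
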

The following proposition, which is derived from \cite[Remark 3]{Hien_ICML2020} and \cite[Lemma 2.1]{Xu2013}, provides an NSDP when $\Phi$ is multi-convex, Lipschitz gradient suggorates are used for $\Phi$,  and $f_i$, $i \in [s]$, are also convex. We then obtain better values of $\gamma_i^k$ for the NSDP and larger extrapolation parameters could be used.  
\begin{proposition}
\label{prop:Lsmooth-strong}
Let $\Phi$, $f_i$ and $\Psi$ be defined as in Proposition \ref{prop:NSDP-titan}. For $i\in [s]$, suppose $x_i\mapsto \Phi(x)$ is an $L_i(x_{\ne i})$-smooth convex function (the Lipschitz constant may depend on $x_{\ne i}$, the values of the other blocks) and $f_i(x_i)$ is convex. Define $\hat x_i^k=x_i^k + \alpha_i^k (x_i^k-x_i^{k-1})$, $\bar x_i^k=x_i^k + \zeta_i^k (x_i^k-x_i^{k-1})$, and  $\bar x^{k,i-1}=(x^{k+1}_1,\ldots,x^{k+1}_{i-1},\bar x^{k}_i, x^{k}_{i+1},\ldots,x^k_s)$, where $\alpha_i^k$ and $\zeta_i^k$ are extrapolation parameters. Suppose 
\begin{equation} 
\label{eq:nesterov}
x_i^{k+1}=\argmin_{x_i} \langle \nabla_i \Phi(\bar x^{k,i-1}),x_i\rangle + f_i(x_i)+ \frac{L_i^k}{2}\|x_i -\hat x_i^k\|^2, 
\end{equation}
where $L_i^k=L_i(x^{k,i-1}_{\ne i}) $. Note that \eqref{eq:nesterov} is equivalent to \eqref{inertial_general} with $$u_i(x_i,x^{k,i-1})=\Phi(x^{k,i-1}) +  \langle \nabla_i \Phi(x^{k,i-1}),x_i-x_i^{k}\rangle + \frac{L_i^k}{2}\|x_i -x_i^k\|^2,$$
and $ \mathcal G_i^k(x_i^k,x_i^{k-1})=  \nabla_i \Phi(\bar x^{k,i-1})- \nabla_i \Phi( x^{k,i-1}) + L_i^k(x_i^k-\hat x_i^k)$. 
Then we have 
$$\Phi(x^{k,i-1}) + f_i(x^k_i)+\gamma_i^k \|x_i^k - x_i^{k-1}\|^2 \geq \Phi(x^{k,i}) + f_i(x^{k+1}_i) + \eta_i^k\|x^{k+1}_i-x_i^k\|^2,
$$
where $$ \gamma^{k}_i=\frac {L^k_i}{2} \big((\zeta_i^k)^2 + \frac{(\gamma_i^k-\alpha_i^k)^2}{\nu} \big), \qquad\eta^{k}_i = \frac{(1-\nu)L^k_i}{2 }.
$$
It implies that the NSDP \eqref{NSDP-titan} is also satisfied. 

If $\alpha_i^k=\zeta_i^k$ then we have Inequality \eqref{NSDP-titan} is satisfied with 
$$\gamma^{k}_i=\frac {L^k_i}{2} (\zeta_i^k)^2  , \qquad\eta^{k}_i = \frac{L^k_i}{2 }. $$   
\end{proposition}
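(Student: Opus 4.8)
The plan is to run the classical Nesterov-type descent argument for accelerated proximal gradient methods, adapted to the feature that the update \eqref{eq:nesterov} evaluates the gradient at the extrapolated point $\bar x^{k,i-1}$ (governed by $\zeta_i^k$) while centering the proximal quadratic at $\hat x_i^k$ (governed by $\alpha_i^k$). Throughout I fix a block $i$ and an index $k$, write $\phi_i(x_i):=\Phi(x_i,x^{k,i-1}_{\ne i})$ so that $\phi_i(x_i^k)=\Phi(x^{k,i-1})$ and $\phi_i(x_i^{k+1})=\Phi(x^{k,i})$, and abbreviate $L:=L_i^k$, $d:=x_i^{k+1}-x_i^k$, $p:=x_i^k-x_i^{k-1}$. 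By hypothesis $\phi_i$ is convex and $L$-smooth, and $\nabla_i\Phi(\bar x^{k,i-1})=\nabla\phi_i(\bar x_i^k)$.

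First I would assemble two inequalities. The descent lemma for $L$-smoothness applied at $\bar x_i^k$ gives $\phi_i(x_i^{k+1})\le \phi_i(\bar x_i^k)+\langle\nabla\phi_i(\bar x_i^k),x_i^{k+1}-\bar x_i^k\rangle+\frac{L}{2}\|x_i^{k+1}-\bar x_i^k\|^2$, and convexity gives $\phi_i(\bar x_i^k)\le \phi_i(x_i^k)+\langle\nabla\phi_i(\bar x_i^k),\bar x_i^k-x_i^k\rangle$; adding them telescopes the gradient terms to $\langle\nabla\phi_i(\bar x_i^k),d\rangle$, yielding $\phi_i(x_i^{k+1})\le \phi_i(x_i^k)+\langle\nabla\phi_i(\bar x_i^k),d\rangle+\frac{L}{2}\|x_i^{k+1}-\bar x_i^k\|^2$. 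Second, since the objective in \eqref{eq:nesterov} is $L$-strongly convex and $x_i^{k+1}$ is its minimizer, the prox-inequality evaluated at the test point $x_i^k$ bounds $f_i(x_i^{k+1})$ above by $f_i(x_i^k)+\langle\nabla\phi_i(\bar x_i^k),-d\rangle+\frac{L}{2}\|x_i^k-\hat x_i^k\|^2-\frac{L}{2}\|x_i^{k+1}-\hat x_i^k\|^2-\frac{L}{2}\|d\|^2$.

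The decisive step is to add these two inequalities: the gradient inner products $\langle\nabla\phi_i(\bar x_i^k),d\rangle$ and $\langle\nabla\phi_i(\bar x_i^k),-d\rangle$ cancel exactly, leaving $\phi_i(x_i^{k+1})+f_i(x_i^{k+1})$ bounded by $\phi_i(x_i^k)+f_i(x_i^k)$ plus $\frac{L}{2}$ times a purely quadratic remainder. I would then substitute $\bar x_i^k=x_i^k+\zeta_i^k p$ and $\hat x_i^k=x_i^k+\alpha_i^k p$ and expand the four squared norms in the variables $d$ and $p$; the coefficient of $\|d\|^2$ collapses to $-1$, that of $\|p\|^2$ to $(\zeta_i^k)^2$, and a cross term $2(\alpha_i^k-\zeta_i^k)\langle d,p\rangle$ survives. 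Applying Young's inequality $2(\alpha_i^k-\zeta_i^k)\langle d,p\rangle\le \nu\|d\|^2+\frac{(\zeta_i^k-\alpha_i^k)^2}{\nu}\|p\|^2$ converts the cross term into the stated constants $\gamma_i^k=\frac{L}{2}\big((\zeta_i^k)^2+\frac{(\zeta_i^k-\alpha_i^k)^2}{\nu}\big)$ and $\eta_i^k=\frac{(1-\nu)L}{2}$. When $\alpha_i^k=\zeta_i^k$ the cross term vanishes, Young's inequality is unnecessary, and the $\|d\|^2$ coefficient stays at $-1$, giving directly the sharper $\gamma_i^k=\frac{L}{2}(\zeta_i^k)^2$ and $\eta_i^k=\frac{L}{2}$.

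Finally, to conclude the NSDP \eqref{NSDP-titan}, I would note that $x^{k,i-1}$ and $x^{k,i}$ differ only in the $i$-th block, so the terms $f_j$ with $j\ne i$ are identical in $\Psi(x^{k,i-1})$ and $\Psi(x^{k,i})$ and cancel; hence the inequality just derived, after moving the $\|d\|^2$ term to the right, is precisely \eqref{NSDP-titan}. I expect the only genuinely delicate point to be the quadratic bookkeeping in the expansion: keeping the two extrapolation parameters $\alpha_i^k$ and $\zeta_i^k$ distinct and confirming that the gradient terms cancel before the Young split, since a sign slip there would corrupt the exact form of $\gamma_i^k$.
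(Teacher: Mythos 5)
Your proof is correct, and it is essentially the intended one: the paper does not write out a proof of Proposition~\ref{prop:Lsmooth-strong} at all, but derives it from \cite[Remark 3]{Hien_ICML2020} and \cite[Lemma 2.1]{Xu2013}, and your argument --- descent lemma at $\bar x_i^k$, block convexity of $\Phi$, the strong-convexity (prox) inequality for the subproblem \eqref{eq:nesterov} tested at $x_i^k$, exact cancellation of the gradient terms, and a Young split of the surviving cross term $2(\alpha_i^k-\zeta_i^k)\langle d,p\rangle$ --- is precisely the standard argument underlying those cited lemmas, including the sharper constants when $\alpha_i^k=\zeta_i^k$ since the cross term then vanishes. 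Note also that you implicitly (and correctly) repaired the typo in the statement, where $\gamma_i^k=\frac{L_i^k}{2}\big((\zeta_i^k)^2+\frac{(\gamma_i^k-\alpha_i^k)^2}{\nu}\big)$ should read $(\zeta_i^k-\alpha_i^k)^2$ in place of $(\gamma_i^k-\alpha_i^k)^2$; your expansion confirms this is the intended quantity.
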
 

\subsection{Augmented Lagrangian and stationary point}
Considering Problem \eqref{CP-main}, its augmented Lagrangian is
\begin{equation}
\label{augmentedL}
\mL_{\beta}(x,y,\omega)=\Theta(x,y)+\langle h(x)+\mB y, \omega\rangle+\frac{\beta}{2}\|h(x)+ \mB y\|^{2}.
\end{equation}
\begin{definition}
 We call $(x^*,y^*)$ a stationary point of Problem \eqref{CP-main} if there exists $\omega^*$ such that  the following conditions are satisfied.
 \begin{equation}
        \label{cond:first-order}
    0 \in \partial_{x_i} (f_i(x_i^*)+ F(x^*)) + \nabla_{x_i} h(x^*)^\top \omega^*, \,\,
     \nabla G(y^*) +   \mB^{\revise{\top}} \omega^*=0,   
     \,\,h( x^*) +\mB y^*=0. 
    \end{equation}   
\end{definition}
We know that finding a stationary point of \eqref{CP-main} is equivalent to finding a critical point of the augmented Lagrangian $\mL_\beta$, see \cite[Section 2.2]{mADMM2022}. We are also interested in an $\varepsilon$-stationary point of \eqref{CP-main}, which is defined as follows.
\begin{definition}
     We call $(x^*,y^*)$ an $\varepsilon$-stationary point of Problem \eqref{CP-main} if there exists $\omega^*$ and $\chi_i\in \partial_{x_i} (f_i(x_i^*)+F(x^*))$ such that  
\vspace{-0.05in}
$$ \| \chi_i + \nabla_{x_i} h(x^*)^\top \omega^*\| \leq \varepsilon, \| \nabla G(y^*) +   \mB^{\revise{\top}} \omega^*\|\leq \varepsilon,   \| h( x^*) +\mB y^*\|\leq \varepsilon.
\vspace{-0.05in}
$$
\end{definition}

\section{Algorithm description and convergence analysis}
\label{sec:analysis}
Before describing iADMMn and presenting its convergence analysis, let us have a discussion on the proof methodology for a convergence guarantee of an alternating direction method of multipliers for solving a \emph{non-convex} optimization problem. An informal general proof recipe that describes the main steps to prove the global convergence of a first-order method to a critical point of the objective function of a \emph{non-convex} optimization problem \revise{without coupling constraints} was introduced in \cite{Attouch2013,Bolte2014}. The three main steps of the recipe are (i) sufficient decrease property of the objective sequence, (ii) a subgradient lower bound for the iterates gap, and (iii) using the K{\L} property, see \cite[Section 3.2]{Bolte2014} for more details. This proof methodology has been used wisely to prove the global convergence of the proximal point algorithm, the forward-backward splitting algorithm, the regularized Gauss–Seidel method  \cite{Attouch2009,Attouch2013}, the proximal alternating linearized minimization algorithm \cite{Bolte2014}, and the multi-block Bregman proximal alternating linearized minimization algorithm \cite{ahookhosh2019multi}. By replacing the sufficient decrease property of the objective sequence in Step (i) with the sufficient decrease property of the augmented Lagrangian sequence or the regularized augmented Lagrangian sequence, the proof methodology of \cite{Attouch2013,Bolte2014} is developed to prove the  
global convergence of  ADMM for solving non-convex problems with linear/nonlinear coupling constraints, see \cite[Section 3]{Wang2019}, \cite[Section 3.3]{Bolte2018_ADMM}, \cite[Theorem 1]{Bot2020}, \cite[Theorem 2]{mADMM2022}, \cite[Section 3]{Yashitini2022}. The augmented Lagrangian function or its regularized/auxiliary form plays the role of a Lyapunov function (sometimes it is called a potential function) of an original Lyapunov methodology. The sufficient decrease property of the Lyapunov function, the subgradient lower bound for the iterate gaps, together with the K{\L} property of the Lyapunov function are sufficient to prove the global convergence of the generated sequence to a critical point of the Lyapunov function. This general proof recipe is also applied when an inertial technique is used to accelerate the convergence, see \cite{Hien_iADMM,Hien_ICML2020,Titan2020,Hien_BMME2022,Ochs2014,Xu2013}. Choosing the appropriate Lyapunov function and establishing its sufficient decrease property is the cornerstone of proving global convergence.  In this paper, we will also use this general proof recipe to prove the global convergence of iADMMn. To that end, in the following, we sequentially describe the update of $x_i$, $y$, and $\omega$, and establish the necessary NSDPs. These NSDPs will be used to prove the sufficient decrease property of an appropriate Lyapunov function that is defined later in \eqref{lyapunov}.

\subsection{Algorithm description and the NSDPs}
\label{sec:alg_describe}
Let us remind the notations
$$x^{k,0} = x^k,\,  \, 
x^{k,i}=(x^{k+1}_1, \ldots,x^{k+1}_{i},x^{k}_{i+1},\ldots,x^{k}_s) \, \text{ for } i \in [s], 
\,\text{ and }\, \, x^{k+1} = x^{k,s}.
$$
\revise{Note that the augmented Lagrangian in \eqref{augmentedL} of Problem~\eqref{CP-main} can be rewritten as} 
$$    \mL_{\beta}(x,y,\omega)=  \sum_{i=1}^s f_i(x_i) + \varphi(x,y,\omega),
$$
where 
\[\begin{split}
    \varphi(x,y,\omega) &= F(x) +G(y) + \langle h(x)+\mB y, \omega\rangle+ \frac{\beta}{2} \|h(x)+\mB y\|^2\\
    &= F(x) +G(y) + \langle h(x)+\mB y, w\rangle+\frac{\beta}{2}\|h(x)\|^2 +\frac{\beta}{2}\|\mB y\|^{2} + \beta \langle h(x), \mB y \rangle.
\end{split}
    \]
    We summarize the description of iADMMn in Algorithm \ref{alg:iADMMn}.   The detailed update rules of $x_i$ and $y$  are elaborated in the following. 
    
\begin{algorithm}[ht!]
\caption{iADMMn for solving Problem~\eqref{CP-main}} 
\label{alg:iADMMn} 
\begin{algorithmic} 
\STATE Suppose $\hat u_i$ is a block surrogate function of $F$ with respect to block $x_i$.
\STATE Parameters of the algorithm are chosen as in \eqref{parameter}.
\STATE Choose initial points $x^{0}$, $y^0$, $\omega^0$. 
\FOR{$k=0,\ldots$}
\FOR{$i=1,\ldots,s$}
   \STATE\label{step-update} Compute the extrapolation point  $\bar x_i^k=x_i^{k} + \alpha_i^k (x_i^k-x_i^{k-1})$. 
\STATE Under Assumption \ref{assump:h2}(i), we update  $x_i$ as in \eqref{eq:xi_update_nosmooth}.
\STATE If together with Assumption \ref{assump:h2}(i) we also have $x_i\mapsto F(x_i,y_{\ne i})$ is $L_i(y_{\ne i})$-smooth then we update $x_i$ as in   \eqref{eq:xi_update}, and if Assumption \ref{assump:h2}(i) is not satisfied but   $x_i\mapsto F(x_i,y_{\ne i})$ is $L_i(y_{\ne i})$-smooth then we update $x_i$ as in  \eqref{eq:xi_update_2}. 
      \ENDFOR
   \STATE Update $y$ as in \eqref{eq:y_update}. 
    \STATE Update $\omega$ as  \begin{equation}
   \label{eq:omega_update}
   \omega^{k+1}=\tau_1 \omega^k  + \tau_2\beta (h( x^{k+1})+ \mathcal B y^{k+1}). 
   \end{equation} 
\ENDFOR
\end{algorithmic}
\end{algorithm}
\paragraph{Update rule for block $x_i$ and NSDP when updating $x_i$} Following the inertial technique proposed in \cite{Titan2020}, we conduct three steps to update block $x_i$: (1) form a block $x_i$ surrogate function of   $\varphi(x,y,\omega)$, (2) add an inertial term $
\langle\mG_i^k(x_i^k,x_i^{k-1}),x_i \rangle$ to this surrogate function, and (3) update $x_i$ by minimizing the sum of $f_i(x_i)$ and the inertial surrogate function. 
To form a block $x_i$ surrogate function of   $\varphi(x,y,\omega)$, we note that if $u_i(x_i,z)$ is a block $x_i$ surrogate function of $F(x) + \frac{\beta}{2} \|h(x)\|^2$  then  
\begin{equation}
    \label{surrogate_of_alpha}
    u_i(x_i,z)+G(y)+ \langle h(x_i,z_{\ne i})+\mB y, w\rangle +\frac{\beta}{2}\|\mB y\|^{2}+ \beta \langle h(x_i,z_{\ne i}), \mB y \rangle 
\end{equation} is a block $x_i$ surrogate function of    $\varphi(x,y,\omega)$. A general update rule for $x_i$ can be described as follows.
\begin{equation}
     \label{eq:update_xi_general}
     \begin{split}
 & x_i^{k+1} \in\argmin_{x_i} \Big\{ f_i(x_i)  +  \langle h(x_i,x^{k,i-1}_{\ne i}),  \omega^k + \beta\mB y^k\rangle + u_i(x_i,x^{k,i-1}) \\
 &\qquad\qquad\qquad \qquad\qquad-\langle \mG_i^k(x_i^k,x_i^{k-1}),x_i\rangle \Big\}.
\end{split}
 \end{equation}
Denote $h_2(x)=\frac12\|h(x)\|^2$. As we assume  $x_i\mapsto h_2(x_i,y_{\ne i})$ is $l_i(y_{\ne i})$-smooth  (see Assumption \ref{assump:h2}(i)), then we take the Lipschitz gradient surrogate for $\frac12\|h(x)\|^2$ and formulate $u_i$ of \eqref{surrogate_of_alpha}  as follows
 \begin{equation} 
 \label{ui1}
 u_i(x_i,z)=\hat u_i(x_i,z)+ \beta h_2(z)+ \beta\langle \nabla_i h_2(z),x_i-z_i \rangle + \frac{\beta\kappa_i(z_{\ne i})}{2}\|x_i-z_i\|^2, 
 \end{equation}
  where $\kappa_i(z_{\ne i})\geq  l_i(z_{\ne i})$ and $\hat u_i$ is a block surrogate function of $F$ with respect to $x_i$. 
After forming the block surrogate for $\varphi$,  the second step is to take the inertial term 
  \begin{equation}
  \label{mG1}
  \mG_i^k(x_i^k,x_i^{k-1})=\beta\nabla_i h_2 (x_i^k,x^{k,i-1}_{\ne i})-\beta\nabla_i h_2(\bar x_i^k,x^{k,i-1}_{\ne i}) + \beta \kappa_i^k (\bar x_i^k-x_i^k),
  \end{equation}
     where $\kappa_i^k \geq l_i(x^{k,i-1}_{\ne i}) = l_i^k$ and  $\bar x_i^k=x_i^{k} + \alpha_i^k (x_i^k-x_i^{k-1})$, here $\alpha_i^k$ is an extrapolation parameter.
         Then the update \eqref{eq:update_xi_general} of $x_i$ is rewritten as follows.
\begin{equation}
    \label{eq:xi_update_nosmooth}
    \begin{split}
 & x_i^{k+1} \in\argmin_{x_i} \Big\{ f_i(x_i) + \hat u_i(x_i,x^{k,i-1}_{\ne i})  +  \langle h(x_i,x^{k,i-1}_{\ne i}),  \omega^k + \beta\mB y^k\rangle
 \\  
  &\,+ \big\langle \beta \nabla_i h(\bar x_i^k,x^{k,i-1}_{\ne i})^\top h(\bar x_i^k,x^{k,i-1}_{\ne i}),x_i \big\rangle
        +\frac{\beta\kappa_i^k}{2}\|x_i- \bar x_i^k\|^2\Big\}.
\end{split}
\end{equation}
Let us establish the NSDP for $\mL_\beta$ when the update in \eqref{eq:xi_update_nosmooth} is used. We have 


$$\|\mG_i^k(x_i^k,x_i^{k-1})\|\leq \beta (l_i^k + \kappa_i^k) \alpha_i^k \|\Delta x_i^k\|. 
$$
The approximation error between $\varphi(x,y,\omega)$ and its block $x_i$ surrogate in \eqref{surrogate_of_alpha} satisfies that 
\[ \begin{split}
    &\hat u_i(x_i,z)-F(x_i,z_{\ne i}) +  \beta h_2(z)+ \beta\langle \nabla_i h_2(z),x_i-z_i \rangle \\
    & \qquad \qquad\qquad\qquad\qquad+ \frac{\kappa_i(z_{\ne i})\beta}{2}\|x_i-z_i\|^2 -  \beta h_2(x_i,z_{\ne i}) \\
    &\geq \theta_i(x_i,z):=\beta h_2(z)  -  \beta h_2(x_i,z_{\ne i})  + \beta\langle \nabla_i h_2(z),x_i-z_i \rangle + \frac{\kappa_i(z_{\ne i})\beta}{2}\|x_i-z_i\|^2.
\end{split}
\]
Note that $\nabla^2_{x_i} \theta_i(x_i,z)=\kappa_i (z_{\ne i})\beta \mathcal I - \beta \nabla^2_{x_i} h_2(x_i,z_{\ne i})$. This implies $x_i\mapsto \theta_i(x_i,z)$
is $\beta(\kappa_i(z_{\ne i})-l_i(z_{\ne i}))$-strongly convex (since $\nabla^2_{x_i} h_2(x_i,z_{\ne i}) \preceq  l_i(z_{\ne i}) \mathcal I$). On the other hand, $\nabla_{x_i} \theta_i(z_i,z)=0$. It follows from \cite[Lemma 1]{Hien_iADMM} that $$\theta_i(x_i,z)\geq \frac{1}{2}\beta(\kappa_i(z_{\ne i})-l_i(z_{\ne i})) \|x_i-z_i\|^2.$$
We then apply Proposition \ref{prop:NSDP-titan} to obtain the following NSDP. 
 \begin{equation}
     \label{NSDP_1}
\revise{\mL_\beta}(x^{k,i},y^k,\omega^k)+\eta^k_i \|\Delta x^{k+1}_i\|^2
\leq
\revise{\mL_\beta}(x^{k,i-1},y^k,\omega^k)+ \gamma_i^k \|\Delta x^k_i\|^2,
 \end{equation}
 where $\kappa_i^k > l_i^k$ and 
\begin{equation}
\label{eta_gamma1}
\eta^k_i = \frac{(1-\nu_i)(\kappa^k_i- l_i^k)\beta}{2}, \quad
\gamma_i^k= \frac{(a_i^k)^2}{2\nu_i(\kappa^k_i- l_i^k)\beta}, \quad a_i^k=\beta(l_i^k+\kappa_i^k)\alpha_i^k. 
\end{equation}
If $x_i\mapsto f_i(x_i)+ \hat u_i(x_i,x^{k,i-1}_{\ne i}) $ 
is convex and $x_i \mapsto h(x)$ is linear then we take $\kappa_i^k=l_i^k$ and the NSDP in \eqref{NSDP_1} can be tighter. Indeed, as $x_i\mapsto h_2(x)$ and $x_i\mapsto f_i(x_i)+ \hat u_i(x_i,x^{k,i-1}_{\ne i})+ \langle h(x_i,x^{k,i-1}_{\ne i}),  \omega^k + \beta\mB y^k\rangle $ are convex, we  apply Proposition \ref{prop:Lsmooth-strong} to obtain
\begin{equation}
\label{temp1}
\begin{split}
   & h_2(x^{k,i})+f_i(x_i^k)+ \hat u_i(x_i^k,x^{k,i-1}_{\ne i})  +  \langle h(x_i^k,x^{k,i-1}_{\ne i}),  \omega^k + \beta\mB y^k\rangle+\eta_i^k \|\Delta x_i^{k+1}\|^2 \\
   & \leq h_2(x^{k,i-1}) +f_i(x_i^{k-1})+ \hat u_i(x_i^{k-1},x^{k,i-1}_{\ne i})  +  \langle h(x_i^{k-1},x^{k,i-1}_{\ne i}),  \omega^k + \beta\mB y^k\rangle\\
   &\qquad\qquad+\gamma_i^k \|\Delta x_i^{k}\|^2,
   \end{split}
\end{equation}
where $\gamma_i^k=\frac12 \beta l_i^k (\alpha_i^k)^2$ and $\eta_i^k=\frac12 \beta l_i^k$. 
Furthermore, note that $\hat u_i(x_i^{k-1},x^{k,i-1}_{\ne i}) =F(x^{k,i-1})$ and $ \hat u_i(x_i^k,x^{k,i-1}_{\ne i}) \geq  F(x_i^k,x^{k,i-1}_{\ne i})= F(x^{k,i})$. Therefore, \eqref{temp1} implies that \eqref{NSDP_1} is satisfied with  $\gamma_i^k=\frac12 \beta l_i^k (\alpha_i^k)^2$ and $\eta_i^k=\frac12 \beta l_i^k$. 
In the following proposition, we summarize the NSDP when using \eqref{eq:xi_update_nosmooth} to update $x_i$ . 

\begin{proposition}[NSDP when updating $x_i$] \label{prop:nsdp_xi}
Suppose Assumption \ref{assump:h2}(i) is satisfied and $x_i$ is updated as in \eqref{eq:xi_update_nosmooth}. We choose $\kappa_i^k>l_i^k$, then the NSDP \eqref{NSDP_1} is satisfied with $\eta_i^k$ and $\gamma_i^k$ defined in \eqref{eta_gamma1}. If $x_i\mapsto f_i(x_i)+ \hat u_i(x_i,x^{k,i-1}_{\ne i})  $ is convex and $x_i \mapsto h(x)$ is linear then we take $\kappa_i^k = l_i^k$ and then the NSDP \eqref{NSDP_1} is satisfied with   $\gamma_i^k=\frac12 \beta l_i^k (\alpha_i^k)^2$ and $\eta_i^k=\frac12 \beta l_i^k$.
\end{proposition}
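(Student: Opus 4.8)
The plan is to read this proposition off from the two abstract NSDP results already established, namely Proposition~\ref{prop:NSDP-titan} for the general case and Proposition~\ref{prop:Lsmooth-strong} for the convex, linear-$h$ case, applied with $\Phi(x) := \varphi(x,y^k,\omega^k)$ held at the current $y^k,\omega^k$, so that $\Psi(x) = \mL_\beta(x,y^k,\omega^k)$ in the notation of those propositions. By construction of $u_i$ in \eqref{ui1} and of the inertial operator $\mG_i^k$ in \eqref{mG1}, the update \eqref{eq:xi_update_nosmooth} is exactly the inertial surrogate minimization \eqref{inertial_general} for this $\Phi$; hence both assertions follow once the hypotheses of the relevant proposition are verified and the constants are matched.

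For the first assertion ($\kappa_i^k>l_i^k$) I would proceed in three checks. First, confirm that the function \eqref{surrogate_of_alpha} built from \eqref{ui1} is a genuine block $x_i$ surrogate of $\varphi$: conditions (a) and (b) of Definition~\ref{def:surrogate-block} reduce to the surrogate properties of $\hat u_i$ for $F$ plus the exact handling of the coupling and $G$ terms, and the approximation error collapses to $e_i(x_i,z) \geq \theta_i(x_i,z)$ because $\hat u_i(x_i,z)\geq F(x_i,z_{\ne i})$. Second, establish the quadratic lower bound required by hypothesis (ii) of Proposition~\ref{prop:NSDP-titan}: from $\nabla^2_{x_i}\theta_i(x_i,z) = \kappa_i(z_{\ne i})\beta\mathcal I - \beta\nabla^2_{x_i}h_2(x_i,z_{\ne i})$ and the Hessian bound $\nabla^2_{x_i}h_2\preceq l_i(z_{\ne i})\mathcal I$ implied by Assumption~\ref{assump:h2}(i), the map $x_i\mapsto\theta_i(x_i,z)$ is $\beta(\kappa_i-l_i)$-strongly convex; since $\nabla_{x_i}\theta_i(z_i,z)=0$, \cite[Lemma~1]{Hien_iADMM} gives $\theta_i(x_i,z)\geq\tfrac{1}{2}\beta(\kappa_i-l_i)\|x_i-z_i\|^2$, that is $\rho_i^k=\beta(\kappa_i^k-l_i^k)$, which is strictly positive precisely because $\kappa_i^k>l_i^k$. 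Third, bound the inertial operator: splitting \eqref{mG1} and using the $l_i^k$-Lipschitz continuity of $\nabla_i h_2$ together with $\|\bar x_i^k-x_i^k\|=\alpha_i^k\|\Delta x_i^k\|$ yields $\|\mG_i^k(x_i^k,x_i^{k-1})\|\leq\beta(l_i^k+\kappa_i^k)\alpha_i^k\|\Delta x_i^k\|$, hence $a_i^k=\beta(l_i^k+\kappa_i^k)\alpha_i^k$. Substituting $\rho_i^k$ and $a_i^k$ into the formulas $\gamma_i^k=(a_i^k)^2/(2\nu_i\rho_i^k)$ and $\eta_i^k=(1-\nu_i)\rho_i^k/2$ of Proposition~\ref{prop:NSDP-titan} reproduces exactly \eqref{eta_gamma1} and therefore \eqref{NSDP_1}.

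For the second assertion (with $x_i\mapsto f_i(x_i)+\hat u_i(x_i,x^{k,i-1}_{\ne i})$ convex and $h$ linear) I would set $\kappa_i^k=l_i^k$ and observe that linearity of $h$ makes $x_i\mapsto h_2(x)$ a convex quadratic that is $l_i^k$-smooth, while the remaining block is convex. With $\kappa_i^k=l_i^k$ the update \eqref{eq:xi_update_nosmooth} coincides with \eqref{eq:nesterov} for the smooth convex part $\beta h_2$ (so that $L_i^k=\beta l_i^k$) and the convex part $f_i+\hat u_i+\langle h(\cdot),\omega^k+\beta\mB y^k\rangle$, in the symmetric regime $\alpha_i^k=\zeta_i^k$. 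Applying Proposition~\ref{prop:Lsmooth-strong} in that regime produces the intermediate inequality \eqref{temp1} with $\gamma_i^k=\tfrac{1}{2}\beta l_i^k(\alpha_i^k)^2$ and $\eta_i^k=\tfrac{1}{2}\beta l_i^k$. Finally I would convert \eqref{temp1} into \eqref{NSDP_1} through the surrogate relations: the majorization property (b), $\hat u_i\geq F$, is used on the updated side to lower-bound $\hat u_i$ by the corresponding value of $F$, while the tightness property (a), $\hat u_i=F$ at the anchor, is used to replace $\hat u_i$ by $F$ exactly on the other side, so that both sides recombine into $\mL_\beta$ and yield \eqref{NSDP_1} with the stated tighter constants.

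The step I expect to demand the most care is the quadratic lower bound on the approximation error in the first assertion: it rests on correctly reading Assumption~\ref{assump:h2}(i) as the Hessian inequality $\nabla^2_{x_i}h_2\preceq l_i\mathcal I$ and on the vanishing of $\nabla_{x_i}\theta_i$ at the base point $z_i$, which together license the integration of strong convexity into the quadratic bound via \cite[Lemma~1]{Hien_iADMM}. The remainder is bookkeeping: matching $a_i^k$ and $\rho_i^k$ to the abstract constants in the first part, and, in the convex part, the routine majorization substitution that turns \eqref{temp1} into \eqref{NSDP_1}.
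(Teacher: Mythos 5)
Your proposal is correct and follows essentially the same route as the paper: the first case is the paper's argument verbatim (lower-bound the approximation error by $\theta_i$, get $\beta(\kappa_i^k-l_i^k)$-strong convexity from the Hessian bound, invoke \cite[Lemma~1]{Hien_iADMM} and then Proposition~\ref{prop:NSDP-titan}(ii) with $a_i^k=\beta(l_i^k+\kappa_i^k)\alpha_i^k$), and the second case likewise matches (Proposition~\ref{prop:Lsmooth-strong} applied to $\beta h_2$ as the smooth convex part with the rest treated as the convex nonsmooth part, then surrogate properties (a) and (b) of $\hat u_i$ to pass from \eqref{temp1} to \eqref{NSDP_1}). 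No gaps to report.
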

We discuss two other situations in the following remarks. 
\begin{remark}
\label{remark-2}
Consider the case that together with Assumption \ref{assump:h2}(i)  we also have $x_i\mapsto F(x_i,y_{\ne i})$ is an $L_i(y_{\ne i})$-smooth function. Then $x_i \mapsto \hat h(x):=F(x) + \frac{\beta}{2}\|h(x)\|^2$ is $\mathbf l_i(y_{\ne i})=L_i(y_{\ne i})+\beta l_i(y_{\ne i})$-smooth. We take the Lipschitz gradient surrogate for $F(x)+\frac{\beta}{2}\|h(x)\|^2$, that is, $u_i$ in \eqref{surrogate_of_alpha} is
        \begin{equation} 
        \label{ui2}
         u_i(x_i,z)=\hat h(z)+\langle \nabla_i\hat h(z),x_i-z_i \rangle + \frac{\kappa_i(z_{\ne i})}{2}\|x_i-z_i\|^2, 
         \end{equation}
        where $\kappa_i(z_{\ne i})\geq \mbfl_i(z_{\ne i})$.  And we take the inertial term
      \begin{equation}
      \label{mG2}
      \mG_i^k(x_i^k,x_i^{k-1})=\nabla_i \hat h(x_i^k,x^{k,i-1}_{\ne i})-\nabla_i \hat h(\bar x_i^k,x^{k,i-1}_{\ne i}) + \kappa_i^k (\bar x_i^k-x_i^k),
      \end{equation}
        where $\kappa_i^k\geq \mbfl_i^k =\mbfl_i(x^{k,i-1}_{\ne i})$. The update \eqref{eq:update_xi_general} of $x_i$ is rewritten as follows 
        \begin{equation}
\label{eq:xi_update} 
\begin{split}
 & x_i^{k+1} \in\argmin_{x_i} \Big\{ f_i(x_i)  +  \langle h(x_i,x^{k,i-1}_{\ne i}),  \omega^k + \beta\mB y^k\rangle
 \\  
  &+ \big\langle \nabla_i F(\bar x_i^k,x^{k,i-1}_{\ne i})+\beta \nabla_i h(\bar x_i^k,x^{k,i-1}_{\ne i})^\top h(\bar x_i^k,x^{k,i-1}_{\ne i}),x_i \big\rangle
        +\frac{\kappa_i^k}{2}\|x_i- \bar x_i^k\|^2\Big\}.
\end{split}
 \end{equation}
 Similarly to the above reasoning of Proposition \ref{prop:nsdp_xi}, we take $\kappa_i^k > \mathbf l_i^k $  and apply Proposition \ref{prop:NSDP-titan} to obtain the NSDP in \eqref{NSDP_1} with 
 \begin{equation}
 \label{eta_gamma2}
\eta^k_i = \frac{(1-\nu_i)(\kappa_i^k- \mathbf l_i^k)\beta}{2}, \quad
\gamma_i^k= \frac{(a_i^k)^2}{2\nu_i(\kappa_i^k- \mathbf  l_i^k)\beta}, \quad a_i^k=\beta(\mathbf  l_i^k+\kappa_i^k)\alpha_i^k. 
\end{equation}
If together with the smoothness of $x_i\mapsto \hat h(x)$ we assume that $x_i \mapsto  f_i(x_i)$ and $x_i\mapsto F(x)$ are convex and $x_i\mapsto h(x)$ is linear, then we take $\kappa_i^k=\mathbf l_i^k$ and \eqref{NSDP_1} holds with $\gamma_i^k=\frac12 \beta \mathbf l_i^k (\alpha_i^k)^2$ and $\eta_i^k=\frac12 \beta  \mathbf l_i^k$. 
 \end{remark}

\begin{remark}
\label{remark_1} 
In this remark, we discuss a case when Assumption \ref{assump:h2}(i) can be removed: we assume $x_i\mapsto F(x_i,y_{\ne i})$ is an $L_i(y_{\ne i})$-smooth function (but $x_i\mapsto h_2(x_i,y_{\ne i})$ can be non-smooth),  then we take
\begin{equation} 
\label{ui3}
u_i(x_i,z)= \beta h_2(x_i,z_{\ne i})+ F(z)+ \langle \nabla_i F(z),x_i-z_i \rangle + \frac{\kappa_i (z_{\ne i})}{2}\|x_i-z_i\|^2,
\end{equation}
where $\kappa_i(z_{\ne i})\geq L_i(z_{\ne i})$. And  we take the inertial term
\begin{equation}
      \label{mG3}
      \mG_i^k(x_i^k,x_i^{k-1})=\nabla_i F (x_i^k,x^{k,i-1}_{\ne i})-\nabla_i F(\bar x_i^k,x^{k,i-1}_{\ne i}) + \kappa_i^k (\bar x_i^k-x_i^k),
\end{equation}
where $\kappa_i^k \geq L_i(x^{k,i-1}_{\ne i})=L_i^k$.  The update \eqref{eq:update_xi_general} of $x_i$ is rewritten as follows  \begin{equation}
\label{eq:xi_update_2} 
\begin{split}
 & x_i^{k+1} \in\argmin_{x_i} \Big\{ f_i(x_i)  +  \langle h(x_i,x^{k,i-1}_{\ne i}),  \omega^k + \revise{\beta}\mB y^k\rangle + \frac{\beta}{2}\|h(x_i,x^{k,i-1}_{\ne i}) \|^2
 \\  
  &\,+ \big\langle \nabla_i F(\bar x_i^k,x^{k,i-1}_{\ne i}),x_i \big\rangle
        +\frac{\kappa_i^k}{2}\|x_i- \bar x_i^k\|^2\Big\}.
\end{split}
 \end{equation}
 Similarly to the reasoning of Proposition \ref{prop:nsdp_xi}, we take $\kappa_i^k>L_i^k$ and apply Proposition \ref{prop:NSDP-titan} to obtain the NSDP in \eqref{NSDP_1} with 
 \begin{equation}
 \label{eta_gamma3}
\eta^k_i = \frac{(1-\nu_i)(\kappa_i^k- L_i^k)\beta}{2}, \quad
\gamma_i^k= \frac{(a_i^k)^2}{2\nu_i(\kappa_i^k- L_i^k)\beta}, \quad a_i^k=\beta(  L_i^k+\kappa_i^k)\alpha_i^k. 
\end{equation}
If together with the smoothness of $x_i\mapsto F(x)$ we assume that $x_i \mapsto  f_i(x_i)$ and $x_i\mapsto F(x)$ are convex and $x_i\mapsto h(x)$ is linear, then we take $\kappa_i^k=L_i^k$ and \eqref{NSDP_1} holds with $\gamma_i^k=\frac12 \beta L_i^k (\alpha_i^k)^2$ and $\eta_i^k=\frac12 \beta  L_i^k$. 
    \end{remark}

\paragraph{Update $y$} As $G$ is $L_G$-smooth, we form a  block $y$ surrogate of $\varphi$ by summing the Lipschitz gradient surrogate of $G$ and the remaining part of $\varphi$:
\[ \begin{split} 
\hat\varphi(y,x,y',\omega)&=G(y')+\langle \nabla G(y'),y-y'\rangle + \frac{L_G}{2}\|y-y'\|^2 \\
&\qquad+ F(x)+ \langle h(x)+\mB y, \omega\rangle+ \frac{\beta}{2}\|h(x)+\mB y\|^2. 
\end{split}
\]
Block $y$ is updated as follows
\begin{equation}
   \label{eq:y_update}
   \begin{split}
y^{k+1}&\in\argmin_{y}\hat\varphi(y,x^{k+1},y^k,\omega^k)\\
       &=\argmin_{y} \langle\revise{\mB^\top}\omega^k +\nabla G(y^k),y \rangle + \frac{\beta}{2} \|h( x^{k+1}) + \mathcal  By\|^2+\frac{L_G}{2}\|y- y^k\|^2,\\
       &=(\beta \mB^{\revise{\top}}\mB + L_G \mathbf I)^{-1}\big(L_G y^k - \nabla G(y^k)-\mB^{\revise{\top}}(\omega^k + \beta h(x^{k+1}))\big).
        \end{split}
   \end{equation}
   \begin{proposition}[Sufficient decrease when updating $y$]
   \label{ynsdp}
  The update in \eqref{eq:y_update} guarantees a sufficient decrease
\begin{equation}
\label{eq:r2}
\revise{\mL_\beta}(x^{k+1},y^{k+1},\omega^k)+ \frac{\delta}{2}\|y^{k+1}-y^k\|^2 \leq \revise{\mL_\beta}(x^{k+1},y^k,\omega^k),
\end{equation}     
where $\delta=L_G+\beta \lambda_{\min}(\mB^{\revise{\top}}\mB)$. If $G$ is convex, then \eqref{eq:r2} is satisfied with $\delta=L_G$.
   \end{proposition}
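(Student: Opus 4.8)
The plan is to read the claim directly off the surrogate structure of the $y$-update, exactly in the spirit of the NSDP propositions of the preliminaries. By construction $\hat\varphi(\cdot,x^{k+1},y^k,\omega^k)$ is a block-$y$ surrogate of $\varphi(x^{k+1},\cdot,\omega^k)$: since $G$ is $L_G$-smooth we have the majorization $G(y)\le G(y^k)+\langle\nabla G(y^k),y-y^k\rangle+\frac{L_G}{2}\|y-y^k\|^2$, while the remaining terms of $\hat\varphi$ coincide with those of $\varphi$. Hence I would first record the two surrogate identities that do all the work, namely $\hat\varphi(y^k,x^{k+1},y^k,\omega^k)=\varphi(x^{k+1},y^k,\omega^k)$ (equality at the anchor point) and $\hat\varphi(y,x^{k+1},y^k,\omega^k)\ge\varphi(x^{k+1},y,\omega^k)$ for all $y$ (majorization). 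These are what convert a decrease of $\hat\varphi$ into a decrease of $\mL_\beta$.

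Next I would observe that $y\mapsto\hat\varphi(y,x^{k+1},y^k,\omega^k)$ is a strongly convex quadratic, its Hessian being $L_G\mathbf I+\beta\mB^\top\mB$, so its strong convexity modulus is exactly $\delta=L_G+\beta\lambda_{\min}(\mB^\top\mB)$. Because $y^{k+1}$ is its unique minimizer, first-order optimality gives $\nabla_y\hat\varphi(y^{k+1},\ldots)=0$, and the strong convexity inequality between $y^{k+1}$ and $y^k$ reads $\hat\varphi(y^k,\ldots)\ge\hat\varphi(y^{k+1},\ldots)+\frac{\delta}{2}\|y^{k+1}-y^k\|^2$, the linear term dropping out by optimality. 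Chaining this with the two identities of the previous step yields $\varphi(x^{k+1},y^k,\omega^k)\ge\varphi(x^{k+1},y^{k+1},\omega^k)+\frac{\delta}{2}\|y^{k+1}-y^k\|^2$, and adding the unchanged term $\sum_i f_i(x_i^{k+1})$ to both sides turns $\varphi$ into $\mL_\beta$, which is \eqref{eq:r2}. This is essentially an instance of Proposition~\ref{prop:NSDP-titan}(i) with no extrapolation, so I could alternatively just invoke it.

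For the refinement when $G$ is convex I would switch to a direct expansion. Writing $A(y):=\langle h(x^{k+1})+\mB y,\omega^k\rangle+\frac{\beta}{2}\|h(x^{k+1})+\mB y\|^2$, which is quadratic with Hessian $\beta\mB^\top\mB$, the exact second-order expansion of $A$ about $y^{k+1}$ together with the optimality relation $\nabla A(y^{k+1})=-\nabla G(y^k)-L_G(y^{k+1}-y^k)$ read off from \eqref{eq:y_update} gives the identity
\[
\varphi(x^{k+1},y^k,\omega^k)-\varphi(x^{k+1},y^{k+1},\omega^k)=-D+L_G\|\Delta y^{k+1}\|^2+\tfrac{\beta}{2}\|\mB\Delta y^{k+1}\|^2,
\]
where $\Delta y^{k+1}=y^{k+1}-y^k$ and $D=G(y^{k+1})-G(y^k)-\langle\nabla G(y^k),\Delta y^{k+1}\rangle$ is the Bregman divergence of $G$. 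Convexity gives $D\ge0$ and $L_G$-smoothness gives $D\le\frac{L_G}{2}\|\Delta y^{k+1}\|^2$; discarding the nonnegative penalty term and using the smoothness bound leaves $\frac{L_G}{2}\|\Delta y^{k+1}\|^2$, i.e.\ \eqref{eq:r2} with $\delta=L_G$. Adding back $\sum_i f_i$ again passes from $\varphi$ to $\mL_\beta$.

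I expect the delicate points to be bookkeeping rather than conceptual. The first is orienting the two surrogate inequalities correctly, using equality at $y^k$ and majorization at $y^{k+1}$, since reversing them breaks the telescoping. The second, and the genuinely subtle one, is interpreting the convex case: one must resist thinking that convexity improves the modulus, because the minimized surrogate is strongly convex with modulus $L_G+\beta\lambda_{\min}(\mB^\top\mB)$ irrespective of convexity of $G$. The constant $\delta=L_G$ is the weaker bound obtained by discarding the penalty contribution $\frac{\beta}{2}\|\mB\Delta y^{k+1}\|^2$, which is the relevant guarantee precisely when $\lambda_{\min}(\mB^\top\mB)=0$. The one computation I would double-check carefully is the sign in the optimality relation for $\nabla A(y^{k+1})$, since an error there would silently flip the final constant.
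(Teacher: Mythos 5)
Your proof is correct, and it splits into a half that mirrors the paper and a half that does not. For the constant $\delta=L_G+\beta\lambda_{\min}(\mB^\top\mB)$ the paper's proof is exactly your first two paragraphs in compressed form: it notes that $y\mapsto\hat\varphi(y,x,y',\omega)$ is $\delta$-strongly convex and invokes Proposition~\ref{prop:NSDP-titan} with no inertial term; what you wrote out (anchor equality, majorization via the descent lemma, strong-convexity inequality at the minimizer) is precisely the content of that citation. For the convex case the routes genuinely differ: the paper again argues by citation, applying Proposition~\ref{prop:Lsmooth-strong} with zero extrapolation, with $\Phi=G$ as the linearized convex smooth part and the convex quadratic $A(y)=\langle h(x^{k+1})+\mB y,\omega^k\rangle+\frac{\beta}{2}\|h(x^{k+1})+\mB y\|^2$ in the role of $f_i$, whereas you derive the exact identity $\varphi(x^{k+1},y^k,\omega^k)-\varphi(x^{k+1},y^{k+1},\omega^k)=-D+L_G\|\Delta y^{k+1}\|^2+\frac{\beta}{2}\|\mB\Delta y^{k+1}\|^2$ (with $D$ the Bregman divergence of $G$ between $y^{k+1}$ and $y^k$) from the second-order expansion of $A$ and the optimality condition \eqref{eq:nabM}; your sign for $\nabla A(y^{k+1})$ is right. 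Your route is more elementary, since it bypasses the external lemmas behind Proposition~\ref{prop:Lsmooth-strong}, and it actually proves more: the bound $D\le\frac{L_G}{2}\|\Delta y^{k+1}\|^2$ uses only $L_G$-smoothness, so $\delta=L_G$ holds without any convexity of $G$, and retaining $\frac{\beta}{2}\|\mB\Delta y^{k+1}\|^2\ge\frac{\beta\lambda_{\min}(\mB^\top\mB)}{2}\|\Delta y^{k+1}\|^2$ rather than discarding it recovers the general constant as well, so your single identity subsumes both claims of the proposition. What the paper's version buys in exchange is brevity and uniformity with the treatment of the $x_i$-blocks, both cases being dispatched by the NSDP propositions already set up in the preliminaries. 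One remark on your closing comment: when $\lambda_{\min}(\mB^\top\mB)=0$ the two constants coincide, so on your own accounting the convex-case clause is never the operative one; its presence in the paper reflects the citation route (Proposition~\ref{prop:Lsmooth-strong} requires convexity) rather than a regime in which $L_G$ is the sharper guarantee.
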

   \begin{proof}
       We note that $y\mapsto \hat \varphi(y,x,y',\omega)$ is $L_G+\beta \lambda_{\min}(\mB^{\revise{\top}}\mB)$-strongly convex. Applying Proposition \ref{prop:NSDP-titan} (note that we do not use inertial term for updating $y$), we obtain \eqref{eq:r2}.  If $G$ is convex, then we apply Proposition \ref{prop:Lsmooth-strong} (we simply take $\alpha_i^k=\beta_i^k=0$ in Proposition  \ref{prop:Lsmooth-strong}) to obtain the result. 
   \end{proof}

 \subsection{Convergence analysis} 
 
Given $\tau_1$, $\tau_2$, we denote $C_1=\frac{(\tau_1 +1)|\tau_1 -\tau_2|}{2\sigma_{\mB}\tau_2\beta (1-|\tau_1 -\tau_2|)}$,
$C_2=\frac{(\tau_1 +1)\tau_2/\tau_1 }{2\sigma_{\mB}\beta(1-|\tau_1 -\tau_2|)(1-|1-\tau_2/\tau_1 |)}$, $C_3=\frac{\delta}{2}-2C_2 L_G^2$, where $\delta$ is defined in Proposition \ref{ynsdp}. \revise{Note that $\gamma_i^k$ and $\eta_i^k$ are the coefficients in the NSDP\eqref{NSDP_1}, their formulas are summarized in Table~\ref{tab:my_label}.

\begin{table}[ht]
     \caption{ \revise{$\gamma_i^k$ and $\eta_i^k$ in the NSDP\eqref{NSDP_1}}}
    \label{tab:my_label}\center
\begin{tabular}{ |l| }
\hline\\
\revise{$\gamma_i^k$ and $\eta_i^k$  are defined in: }\\
\quad \revise{- Proposition \ref{prop:nsdp_xi} if we assume Assumption \ref{assump:h2}(i)  and use the update rule in \eqref{eq:xi_update_nosmooth},}
    \\
\quad \revise{- Remark \ref{remark-2} if we assume  Assumption \ref{assump:h2}(i) together with $x_i\mapsto F(x_i,y_{\ne i})$  being}\\ 
\qquad\qquad \revise{an $L_i(y_{\ne i})$-smooth function and use the update rule in \eqref{eq:xi_update},}
   \\
  \quad \revise{- Remark \ref{remark_1} if Assumption~\ref{assump:h2}(i) is not satisfied but $F(x_i,y_{\ne i})$ is an $L_i(y_{\ne i})$-smooth}\\  \qquad\qquad   \revise{function and we use the update rule in \eqref{eq:xi_update_2}}. ~\\
\hline    
\end{tabular}
\end{table}}
We need to choose the parameters such that they satisfy the following conditions.
 \begin{equation}
 \label{parameter}
 \begin{split}
  &\tau_1\in (0,1],\quad   \tau_2/\tau_1 \in (0,2),\quad |\tau_1-\tau_2|<1\\
  &\gamma_i^k \leq B_1 \revise{\eta_i^{k-1}}, \quad 8 C_2 L_G^2 \leq B_2 C_3,
 \end{split}
 \end{equation}
where $B_1, B_2\in (0,1)$ are some constants.
In this section, we will establish the subsequential convergence and the global convergence for iADMMn.
The following proposition provides a recursive inequality for $\{\revise{\mL_\beta}(x^k,y^k;\omega^k)\}$. 
\begin{proposition}
    \label{prop:Lk-recursive} \revise{Considering Algorithm~\ref{alg:iADMMn},}
     the following inequality holds
    \[
\begin{split}
&\revise{\mL_\beta}(x^{k+1},y^{k+1},\omega^{k+1}) -\frac{1-\tau_1 }{2\tau_2 \beta} \| \omega^{k+1}\|^2+ \sum_{i=1}^s\eta_i^k \|\Delta x_i^{k+1}\|^2 + C_3 \|\Delta y^{k+1}\|^2\\
&\leq \revise{\mL_\beta}(x^{k},y^{k},\omega^{k})-\frac{1-\tau_1 }{2\tau_2 \beta} \| \omega^{k}\|^2+ \sum_{i=1}^s\gamma_i^k \|\Delta x_i^{k}\|^2  \\
&\qquad+ C_1\big(\| \revise{\mB^\top}\Delta \omega^{k}\|^2 -\|\revise{\mB^\top} \Delta \omega^{k+1} \|^2\big)+ 8C_2 L_G^2 \|\Delta y^{k}\|^2 .
\end{split}
\]
\end{proposition}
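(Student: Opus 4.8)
The plan is to track the one-step change of the augmented Lagrangian $\mL_\beta$ and split it according to the three update blocks, then to control the dual increment that this splitting leaves behind. Concretely, I would write $\mL_\beta(x^{k+1},y^{k+1},\omega^{k+1})-\mL_\beta(x^k,y^k,\omega^k)$ as the sum of (i) the change from the $x$-update at fixed $(y^k,\omega^k)$, (ii) the change from the $y$-update at fixed $(x^{k+1},\omega^k)$, and (iii) the change from the $\omega$-update at fixed $(x^{k+1},y^{k+1})$. For (i) I would sum the block NSDP \eqref{NSDP_1} over $i=1,\ldots,s$; since the $i$-th inequality relates $\mL_\beta(x^{k,i-1},y^k,\omega^k)$ to $\mL_\beta(x^{k,i},y^k,\omega^k)$, the intermediate points telescope and give $\mL_\beta(x^{k+1},y^k,\omega^k)+\sum_i\eta_i^k\|\Delta x_i^{k+1}\|^2\le \mL_\beta(x^k,y^k,\omega^k)+\sum_i\gamma_i^k\|\Delta x_i^k\|^2$. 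For (ii) I would invoke Proposition \ref{ynsdp} directly, which supplies the decrease $\tfrac{\delta}{2}\|\Delta y^{k+1}\|^2$.

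For (iii), I would use that $\mL_\beta$ is affine in $\omega$, so the change equals $\langle h(x^{k+1})+\mB y^{k+1},\Delta\omega^{k+1}\rangle$; reading the residual $h(x^{k+1})+\mB y^{k+1}=(\omega^{k+1}-\tau_1\omega^k)/(\tau_2\beta)$ off \eqref{eq:omega_update} and expanding the inner product with the polarization identity yields exactly $\tfrac{1-\tau_1}{2\tau_2\beta}\big(\|\omega^{k+1}\|^2-\|\omega^k\|^2\big)+\tfrac{1+\tau_1}{2\tau_2\beta}\|\Delta\omega^{k+1}\|^2$. The first term is the telescoping $\|\omega\|^2$ correction that I would move to the left, producing the Lyapunov term $-\tfrac{1-\tau_1}{2\tau_2\beta}\|\omega^{k+1}\|^2$. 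After combining (i)--(iii), the only term not yet in the desired shape is the leftover $\tfrac{1+\tau_1}{2\tau_2\beta}\|\Delta\omega^{k+1}\|^2$, and the whole proposition reduces to the single estimate $\tfrac{1+\tau_1}{2\tau_2\beta}\|\Delta\omega^{k+1}\|^2+C_1\|\mB^\top\Delta\omega^{k+1}\|^2\le C_1\|\mB^\top\Delta\omega^k\|^2+8C_2L_G^2\|\Delta y^k\|^2+2C_2L_G^2\|\Delta y^{k+1}\|^2$; the last $\Delta y^{k+1}$ term is precisely what converts the $\delta/2$ coming from (ii) into $C_3=\delta/2-2C_2L_G^2$ on the left.

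This dual-increment estimate is the heart of the argument, and the main obstacle, because the multiplier is not produced by minimizing anything, so its increment must be recovered indirectly. To do so I would use the optimality condition of the $y$-subproblem \eqref{eq:y_update}, namely $\mB^\top\omega^k+\nabla G(y^k)+\beta\mB^\top(h(x^{k+1})+\mB y^{k+1})+L_G\Delta y^{k+1}=0$; eliminating the residual through \eqref{eq:omega_update} turns this into $\mB^\top\omega^{k+1}+(\tau_2-\tau_1)\mB^\top\omega^k=-\tau_2\nabla G(y^k)-\tau_2L_G\Delta y^{k+1}$, and subtracting the same identity at index $k-1$ gives the recursion $\mB^\top\Delta\omega^{k+1}+(\tau_2-\tau_1)\mB^\top\Delta\omega^k=-\tau_2(\nabla G(y^k)-\nabla G(y^{k-1}))-\tau_2L_G(\Delta y^{k+1}-\Delta y^k)$, whose right-hand side is controlled by the $L_G$-smoothness of $G$ in terms of $\|\Delta y^k\|$ and $\|\Delta y^{k+1}\|$. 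I would then pass from $\|\Delta\omega^{k+1}\|$ to $\|\mB^\top\Delta\omega^{k+1}\|$ via $\|\mB^\top\omega\|^2\ge\sigma_\mB\|\omega\|^2$ (Assumption \ref{assump:h2}(ii), equivalently full row rank of $\mB$), and apply a Young-type bound $\|a+\rho b\|\le\|d\|\Rightarrow\|a\|^2\le|\rho|\|b\|^2+\tfrac{1}{1-|\rho|}\|d\|^2$ with $a=\mB^\top\Delta\omega^{k+1}$, $b=\mB^\top\Delta\omega^k$, and $\rho=\tau_2-\tau_1$. The condition $|\tau_1-\tau_2|<1$ makes this a genuine contraction and is exactly what keeps $C_1$ finite, while $\tau_1\in(0,1]$ and $\tau_2/\tau_1\in(0,2)$ secure positivity of the $\|\omega\|^2$ correction and of $C_2$; matching the coefficients of $\|\mB^\top\Delta\omega^k\|^2$ and $\|\Delta y\|^2$ through these constants (using $\tau_1\le 1$) is then a bookkeeping step that reproduces the stated $C_1,C_2,C_3$ and closes the proof.
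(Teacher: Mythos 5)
Your proposal is correct and follows essentially the same route as the paper's proof: telescoping the block NSDPs \eqref{NSDP_1}, invoking Proposition \ref{ynsdp} for the $y$-step, expanding the dual step with the polarization identity (the paper's \eqref{eq:gammaB_5}), and bounding $\mB^\top \Delta\omega^{k+1}$ through the $y$-optimality condition \eqref{eq:nabM} combined with \eqref{eq:omega_update}, the $L_G$-smoothness of $G$, the bound $\sigma_{\mB}\|\Delta\omega\|^2\le\|\mB^\top\Delta\omega\|^2$, and a contraction estimate (the paper's \eqref{eq:gammaB_2}--\eqref{eq:gammaB_4}). The only cosmetic difference is that you contract with the factor $|\tau_1-\tau_2|$ directly, whereas the paper first divides by $\tau_1$ and contracts with $|1-\tau_2/\tau_1|$; both variants recover the stated constants $C_1,C_2,C_3$ after the same $\tau_1\le 1$ bookkeeping.
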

\begin{proof} 
Optimality condition of \eqref{eq:y_update} gives us
\begin{equation}
\label{eq:nabM}
\revise{\mB^\top} (\omega^k + \beta(h(x^{k+1})+\mB y^{k+1}))+\nabla G(y^k) + L_G(y^{k+1}-y^k)=0
\end{equation}
Denote $t^{k+1}=  L_G \Delta y^{k+1} +\nabla G(y^{k})$.  From \eqref{eq:nabM} and the update of $\omega$ in \eqref{eq:omega_update} we have
\begin{align*}
 \revise{\mB^\top} ((1-\frac{\tau_1}{\tau_2})\omega^k + \frac{1}{\tau_2}\omega^{k+1} )=-t^{k+1},
\end{align*}
which implies that 
\begin{equation*}
 \frac{1}{\tau_2}\revise{\mB^\top} \Delta \omega^{k+1} + (1-\frac{\tau_1}{\tau_2}) \revise{\mB^\top} \Delta \omega^{k} = -\Delta t^{k+1}.
\end{equation*}
Hence, we have 
\begin{equation}
\label{eq:gammaB}
 \frac{1}{\tau_1}\revise{\mB^\top} \Delta \omega^{k+1} =  (1-\frac{\tau_2}{\tau_1})\revise{\mB^\top} \Delta \omega^{k}  -\frac{\tau_2}{\tau_1}\Delta t^{k+1}.
\end{equation}
Note that if $ \tau_2/\tau_1 \in [1,2)$ then \eqref{eq:gammaB} can be rewritten as 
$$\frac{1}{\tau_1}\revise{\mB^\top} \Delta \omega^{k+1} =  -(\tau_2/\tau_1-1)\revise{\mB^\top} \Delta \omega^{k}  -  (2-\tau_2/\tau_1)\frac{\tau_2/\tau_1}{2-\tau_2/\tau_1}\Delta t^{k+1}.
$$ 
Hence, from $\tau_2/\tau_1 \in (0,2)$ and the convexity of the norm $\|\cdot\|$, we can derive from \eqref{eq:gammaB} the following inequality 
\begin{equation}
\label{eq:gammaB_2}
\frac{1}{\tau_1 }\|\revise{\mB^\top} \Delta \omega^{k+1} \|^2\leq |1-\tau_2/\tau_1 |\| \revise{\mB^\top}\Delta \omega^{k}\|^2 + \frac{(\tau_2/\tau_1 )^2 }{1-|1-\tau_2/\tau_1 |} \|\Delta t^{k+1}\|^2.
\end{equation}
On the other hand, from the definition of $t^{k+1}$ and the Lipschitz continuity of $\nabla G$, we have $\|\Delta t^{k+1}\| \leq L_G \|\Delta y^{k+1}\| + L_G  \|\Delta y^{k}\| + L_G  \|\Delta y^{k}\|$. It implies that  
\[
\begin{split}
 \|\Delta t^{k+1}\|^2\leq  2L_G^2 \|\Delta y^{k+1}\|^2 +  8 L_G^2 \|\Delta y^{k}\|^2.
\end{split}
\]
Hence, from \eqref{eq:gammaB_2} we obtain
\begin{equation}
\label{eq:gammaB_4}
\begin{split}
\|\revise{\mB^\top}\Delta \omega^{k+1} \|^2 &\leq \frac{|\tau_1-\tau_2|}{(1-|\tau_1-\tau_2|)}\big(\| \revise{\mB^\top}\Delta \omega^{k}\|^2 -\|\revise{\mB^\top} \Delta \omega^{k+1} \|^2\big) \\
&\quad+ \frac{\tau_2^2/\tau_1 }{(1-|\tau_1-\tau_2|)(1-|1-\tau_2/\tau_1|)}\big(2L_G^2\|\Delta y^{k+1}\|^2 + 8 L_G^2 \|\Delta y^{k}\|^2 \big).
\end{split}
\end{equation}
Furthermore, 
\[\begin{split}
&\revise{\mL_\beta}(x^{k+1},y^{k+1},\omega^{k+1})-\revise{\mL_\beta}(x^{k+1},y^{k+1},\omega^{k})\\
&=\big\langle h(x^{k+1})+\mB y^{k+1}, \omega^{k+1}- \omega^{k} \big\rangle\\
&=\big\langle\frac{1}{\tau_2\beta}(\omega^{k+1}-\tau_1  \omega^{k}), \omega^{k+1}- \omega^{k} \big\rangle\\
&=\frac{1}{\tau_2\beta}\big\langle\tau_1 (\omega^{k+1}- \omega^{k})+ (1-\tau_1 ) \omega^{k+1},\omega^{k+1}- \omega^{k} \big\rangle \\
&=\frac{\tau_1 }{\tau_2\beta} \|\Delta \omega^{k+1}\|^2 + \frac{1-\tau_1 }{2\tau_2 \beta} \big( \|\Delta \omega^{k+1}\|^2+ \| \omega^{k+1}\|^2-\| \omega^{k}\|^2\big),
\end{split}
\]
which leads to 
\begin{equation}
\label{eq:gammaB_5}
\begin{split}
&\revise{\mL_\beta}(x^{k+1},y^{k+1},\omega^{k+1})-\revise{\mL_\beta}(x^{k+1},y^{k+1},\omega^{k})\\
&=\frac{\tau_1 +1}{2\tau_2\beta}\|\Delta \omega^{k+1}\|^2 + \frac{1-\tau_1 }{2\tau_2 \beta}\big(  \| \omega^{k+1}\|^2-\| \omega^{k}\|^2\big).
\end{split}
\end{equation}
Therefore, from \eqref{eq:gammaB_4}, \eqref{eq:gammaB_5}, and noting that $\sigma_{\mB} \|\Delta \omega^{k+1}\|^2 \leq \|\mB^{\revise{\top}} \Delta \omega^{k+1}\|^2$, we have 
\[\begin{split}
&\revise{\mL_\beta}(x^{k+1},y^{k+1},\omega^{k+1})-\revise{\mL_\beta}(x^{k+1},y^{k+1},\omega^{k})
\\
&\leq C_1\big(\| \revise{\mB^\top}\Delta \omega^{k}\|^2 -\|\revise{\mB^\top} \Delta \omega^{k+1} \|^2\big) + C_2 2 L_G^2 \|\Delta y^{k+1}\|^2+ C_28L_G^2 \|\Delta y^{k}\|^2 \\
&\qquad\qquad+ \frac{1-\tau_1 }{2\tau_2 \beta}\big( \| \omega^{k+1}\|^2-\| \omega^{k}\|^2\big).
\end{split}
\]
Together with \eqref{NSDP_1} and \eqref{eq:r2} we obtain the result.
\end{proof}
\begin{lemma}
\label{lemma:Lhat}
  Denote 
    \begin{equation} 
    \label{Lhat}
    \begin{split}
    \hat{\mL}^k&= \revise{\mL_\beta}(x^{k},y^{k},\omega^{k})-\frac{1-\tau_1 }{2\tau_2 \beta} \| \omega^{k}\|^2+ \sum_{i=1}^s B_1\revise{ \eta_i^{k-1}} \|\Delta x_i^{k}\|^2  \\
    &\qquad+ C_1 \| \revise{\mB^\top}\Delta \omega^{k}\|^2 +  B_2 C_3  \|\Delta y^{k}\|^2. 
    \end{split}
    \end{equation}
    We have $\hat{\mL}^k\geq \nu$ for all $k\geq 1$, where $\nu$ is the lower bound of $\Theta(x,y)$ (see Assumption \ref{assump:theta}). 
\end{lemma}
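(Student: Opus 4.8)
The plan is to peel the bound back to the lower bound $\nu$ of $\Theta$ by exploiting that $\mB$ is onto, which is exactly what Assumption \ref{assump:theta} provides through $\sigma_{\mB}=\lambda_{\min}(\mB\mB^\top)>0$. Since $\eta_i^{k-1}\ge 0$, I first discard the manifestly nonnegative blocks $\sum_i B_1\eta_i^{k-1}\|\Delta x_i^k\|^2$, so that it suffices to bound
\[
\revise{\mL_\beta}(x^k,y^k,\omega^k)-\frac{1-\tau_1}{2\tau_2\beta}\|\omega^k\|^2+C_1\|\mB^\top\Delta\omega^k\|^2+B_2C_3\|\Delta y^k\|^2
\]
from below by $\nu$. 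Writing $r^k:=h(x^k)+\mB y^k$ and $\revise{\mL_\beta}=\Theta(x^k,y^k)+\langle r^k,\omega^k\rangle+\frac{\beta}{2}\|r^k\|^2$, I introduce the feasible surrogate $\tilde y^k:=y^k-\mB^\top(\mB\mB^\top)^{-1}r^k$, which satisfies $h(x^k)+\mB\tilde y^k=0$, $\mB(y^k-\tilde y^k)=r^k$, and the norm control $\|y^k-\tilde y^k\|\le\sigma_{\mB}^{-1/2}\|r^k\|$.

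Next I use $\Theta(x^k,\tilde y^k)=F(x^k)+\sum_i f_i(x_i^k)+G(\tilde y^k)\ge\nu$ to replace $F+\sum_i f_i$ by $\nu-G(\tilde y^k)$, and recover $G(y^k)-G(\tilde y^k)$ by the $L_G$-descent inequality $G(y^k)\ge G(\tilde y^k)+\langle\nabla G(y^k),y^k-\tilde y^k\rangle-\frac{L_G}{2}\|y^k-\tilde y^k\|^2$. Together with the identity $\langle r^k,\omega^k\rangle=\langle\mB^\top\omega^k,\,y^k-\tilde y^k\rangle$ (from $\mB(y^k-\tilde y^k)=r^k$), this collapses the objective-plus-pairing into
\[
\revise{\mL_\beta}(x^k,y^k,\omega^k)\ge\nu+\langle\nabla G(y^k)+\mB^\top\omega^k,\,y^k-\tilde y^k\rangle-\frac{L_G}{2}\|y^k-\tilde y^k\|^2+\frac{\beta}{2}\|r^k\|^2.
\]
The construction is engineered precisely so that the dangerous dual pairing now enters only through the $y$-optimality residual $\nabla G(y^k)+\mB^\top\omega^k$.

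I then evaluate that residual with the $y$-update optimality \eqref{eq:nabM} and the multiplier rule \eqref{eq:omega_update}: eliminating $\nabla G(y^{k-1})$ from \eqref{eq:nabM} and substituting $\omega^k=\tau_1\omega^{k-1}+\tau_2\beta r^k$ gives $\nabla G(y^k)+\mB^\top\omega^k=[\nabla G(y^k)-\tau_1\nabla G(y^{k-1})-\tau_1L_G\Delta y^k]+(\tau_2-\tau_1)\beta\mB^\top r^k$. Pairing against $y^k-\tilde y^k$ and using $\langle\mB^\top r^k,y^k-\tilde y^k\rangle=\|r^k\|^2$ turns the second block into $(\tau_2-\tau_1)\beta\|r^k\|^2$, while $\|r^k\|=\frac{1}{\tau_2\beta}\|\omega^k-\tau_1\omega^{k-1}\|$ is supplied by \eqref{eq:omega_update}; at this stage the whole bound is a quadratic form in $\|\Delta y^k\|$, $\|\mB^\top\Delta\omega^k\|$, $\|r^k\|$ and $\|\omega^k\|$. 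I would close it by Young's inequality: the positive $\frac{\beta}{2}\|r^k\|^2$ dominates $\frac{L_G}{2}\|y^k-\tilde y^k\|^2\le\frac{L_G}{2\sigma_{\mB}}\|r^k\|^2$ because $\beta\sigma_{\mB}>L_G$ (a consequence of the conditions ensuring $C_3>0$), the gradient-difference cross term is absorbed into $B_2C_3\|\Delta y^k\|^2$ through $8C_2L_G^2\le B_2C_3$, and the remaining residual cross terms into $C_1\|\mB^\top\Delta\omega^k\|^2$ by the calibration of $C_1$.

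The hard part is this final balancing over the full parameter range $\tau_1\in(0,1]$, $\tau_2/\tau_1\in(0,2)$. When $\tau_1=\tau_2=1$ everything is clean: $r^k=\frac{1}{\beta}\Delta\omega^k$, the correction term and $C_1$ vanish, and the estimate reduces to requiring $B_2C_3\ge\frac{2L_G^2}{\sigma_{\mB}\beta-L_G}$, which follows from $8C_2L_G^2\le B_2C_3$. For $\tau_1<1$, however, $r^k$ no longer vanishes at stationarity and $\|r^k\|$ scales with $\|\omega^k-\tau_1\omega^{k-1}\|$ rather than with an iterate gap, while the block $(1-\tau_1)\nabla G(y^k)$ hidden inside $\nabla G(y^k)-\tau_1\nabla G(y^{k-1})$ is of the order of $\|\mB^\top\omega^k\|$; showing that all these genuine $\|\omega^k\|$-scale contributions are exactly neutralized by the correction $-\frac{1-\tau_1}{2\tau_2\beta}\|\omega^k\|^2$ together with $\frac{\beta}{2}\|r^k\|^2$ and $C_1\|\mB^\top\Delta\omega^k\|^2$ — which is where the hypotheses $|\tau_1-\tau_2|<1$ and $\tau_2/\tau_1\in(0,2)$ in \eqref{parameter} are used — is the delicate bookkeeping at the technical heart of the proof.
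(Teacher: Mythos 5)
Your proposal takes a fundamentally different route from the paper's proof, and it contains a gap that cannot be repaired within that route. You attempt to prove $\hat{\mL}^k\geq\nu$ \emph{pointwise}, using only relations available at iteration $k$ (the feasibility surrogate $\tilde y^k$, the descent lemma for $G$, the $y$-optimality condition \eqref{eq:nabM}, and the multiplier update \eqref{eq:omega_update}), and you yourself defer the $\tau_1<1$ balancing as the unfinished ``technical heart''. That step is not merely delicate: it is impossible. Take $\tau_1=\tau_2=\tau<1$ (so $C_1=0$) and a configuration with $\omega^{k-1}=\frac{1+\tau}{\tau}\,\omega^k$, $\Delta x^k=0$, $\Delta y^k=0$, and $\Theta(x^k,y^k)$ close to $\nu$. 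This is consistent with every relation your argument uses: \eqref{eq:omega_update} gives $h(x^k)+\mB y^k=-\frac{1}{\beta}\omega^k$, and \eqref{eq:nabM} then only pins down $\nabla G(y^k)=-\frac{1}{\tau}\mB^\top\omega^k$. Yet in this configuration
\[
\hat{\mL}^k=\Theta(x^k,y^k)-\frac{1}{2\beta}\|\omega^k\|^2-\frac{1-\tau}{2\tau\beta}\|\omega^k\|^2<\nu
\]
whenever $\omega^k\neq 0$ and $\Theta(x^k,y^k)-\nu$ is small enough. So no chain of single-iteration inequalities can close your bound; what excludes such states is the \emph{history} of the iteration. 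Relatedly, you misread the role of $-\frac{1-\tau_1}{2\tau_2\beta}\|\omega^k\|^2$: it is a negative contribution that makes the inequality harder, not a resource that ``neutralizes'' dangerous terms.

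The paper's proof is global, not pointwise, and rests on two ingredients you never invoke. First, Proposition \ref{prop:Lk-recursive} together with the conditions \eqref{parameter} makes $\{\hat{\mL}^k\}$ non-increasing. Second, dropping the nonnegative terms in \eqref{Lhat}, using $\Theta\geq\nu$ and $\frac{\beta}{2}\|h(x^k)+\mB y^k\|^2\geq 0$, and substituting \eqref{eq:omega_update} into $\langle h(x^k)+\mB y^k,\omega^k\rangle$ yields the telescoping estimate
\[
\hat{\mL}^k\;\geq\;\nu+\frac{\tau_1}{2\tau_2\beta}\big(\|\omega^k\|^2-\|\omega^{k-1}\|^2\big),
\]
so that $\sum_{k=1}^K(\hat{\mL}^k-\nu)\geq-\frac{\tau_1}{2\tau_2\beta}\|\omega^0\|^2$ for all $K$. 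If $\hat{\mL}^{k_0}<\nu$ for some $k_0$, monotonicity forces $\hat{\mL}^k\leq\hat{\mL}^{k_0}<\nu$ for all $k\geq k_0$, hence the partial sums diverge to $-\infty$ --- a contradiction (this is the technique of \cite{melo2017} that the paper cites). Your $\tau_1=\tau_2=1$ special case can indeed be closed pointwise along your lines (there the correction term and $C_1$ vanish, and the residual $\nabla G(y^k)+\mB^\top\omega^k$ is purely of difference type), but the content of the lemma is precisely the full parameter range in \eqref{parameter}, and for that the monotonicity-plus-telescoping contradiction argument is indispensable.
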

\begin{proof}
We use the technique of \cite[Lemma 2.9]{melo2017}. From Proposition \ref{prop:Lk-recursive} and the conditions $\gamma_i^k \leq B_1 \eta_i^{k-1}$ and $8C_2 L_G^2 \leq B_2 C_3 $ for some constants $B_1, B_2 \in (0,1)$, we have $\hat{\mL}^{k+1} \leq \hat{\mL}^k$, that is, $\{\hat{\mL}^k\}$ is a non-increasing sequence. Suppose there exists $k_0$ such that $\hat{\mL}^k <\nu$ for all $k\geq k_0$, then we have 
\[
\sum_{k=1}^K (\hat \mL^k - \vartheta) \leq \sum_{k=1}^{k_0} (\hat \mL^k -\vartheta) + (K-k_0) (\hat \mL^k -\vartheta).
\]
This implies that $\sum_{k=1}^\infty (\hat \mL^k - \vartheta)= -\infty$. 
However,  since 
   \[
\begin{split}
\hat{\mL}^k &\geq \revise{\mL_\beta}(x^{k},y^{k},\omega^{k})-\frac{1-\tau_1 }{2\tau_2 \beta}\| \omega^{k}\|^2\\
&\geq \nu+\langle h(x^k)+\mB y^k, \omega^k\rangle -\frac{1-\tau_1 }{2\tau_2 \beta}\| \omega^{k}\|^2\\
&=\nu+\frac{1}{\tau_2\beta}\langle \omega^{k}-\tau_1 \omega^{k-1}, \omega^k\rangle -\frac{1-\tau_1}{2\tau_2 \beta}\| \omega^{k}\|^2\\
&=\nu+\frac{1+\tau_1 }{2\tau_2 \beta}\| \omega^{k}\|^2-\frac{\tau_1 }{\tau_2 \beta}\langle \omega^{k-1}, \omega^k\rangle\\
&=\nu+\frac{1+\tau_1 }{2\tau_2 \beta}\| \omega^{k}\|^2+\frac{\tau_1 }{2\tau_2 \beta}(\|\Delta \omega^k\|^2-\| \omega^{k}\|^2-\| \omega^{k-1}\|^2)\\
&\geq\nu+\frac{1}{2\tau_2 \beta}\| \omega^{k}\|^2-\frac{\tau_1 }{2\tau_2 \beta}\| \omega^{k-1}\|^2\\
&\geq \nu+\frac{\tau_1 }{2\tau_2 \beta}(\| \omega^{k}\|^2-| \omega^{k-1}\|^2),
\end{split}
\]
 we have
$$ \sum_{k=1}^K (\hat \mL^k - \vartheta) \geq \frac{\tau_1 }{2\tau_2 \beta} \sum_{k=1}^K(\|\omega^k\|^2 - \|\omega^{k-1}\|^2)\geq  \frac{\tau_1 }{2\tau_2 \beta} (-\|\omega^{0}\|^2), 
$$
which gives a contradiction to the fact $\sum_{k=1}^\infty (\hat \mL^k - \vartheta)= -\infty$. 
\end{proof}

\begin{proposition}
    \label{prop:delta_converge}
 \revise{Consider Algorithm~\ref{alg:iADMMn}. The values of $\gamma_i^k$ and $\eta_i^k$  are given in Table~\ref{tab:my_label}.}  Suppose there exist $\tilde\eta_i>0$ such that $\eta_i^k\geq \tilde\eta_i$ for all $i\in [s]$ and $k\geq 1$. Then the sequence $\{\Delta y^k\}_{k\geq 0}$, $\{\Delta x_i^k\}_{k\geq 0}$, and $\{\Delta \omega^k\}_{k\geq 0}$ converge to 0. 
\end{proposition}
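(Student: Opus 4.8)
The plan is to extract summability of all iterate gaps from a telescoping Lyapunov descent, and then treat the multiplier gap $\Delta\omega^k$ separately through the dual recursion \eqref{eq:gammaB}. The first step is to turn the recursive inequality of Proposition \ref{prop:Lk-recursive} into a clean one-step decrease for $\hat{\mL}^k$ defined in \eqref{Lhat}. Writing $P^k:=\mL_\beta(x^k,y^k,\omega^k)-\tfrac{1-\tau_1}{2\tau_2\beta}\|\omega^k\|^2$, I would move the term $C_1\|\mB^\top\Delta\omega^{k+1}\|^2$ to the left-hand side of that inequality, and then bound the right-hand side using the parameter conditions \eqref{parameter}, namely $\sum_i\gamma_i^k\|\Delta x_i^k\|^2\leq\sum_i B_1\eta_i^{k-1}\|\Delta x_i^k\|^2$ and $8C_2L_G^2\|\Delta y^k\|^2\leq B_2C_3\|\Delta y^k\|^2$. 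The right-hand side then collapses to exactly $\hat{\mL}^k$, while the left-hand side equals $\hat{\mL}^{k+1}$ plus the residual $(1-B_1)\sum_i\eta_i^k\|\Delta x_i^{k+1}\|^2+(1-B_2)C_3\|\Delta y^{k+1}\|^2$, which is nonnegative since $B_1,B_2\in(0,1)$ and $C_3>0$ (the latter following from \eqref{parameter}). This yields
\[
\hat{\mL}^k-\hat{\mL}^{k+1}\;\geq\;(1-B_1)\sum_{i=1}^s\eta_i^k\|\Delta x_i^{k+1}\|^2+(1-B_2)C_3\|\Delta y^{k+1}\|^2\;\geq\;0.
\]

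For the second step, Lemma \ref{lemma:Lhat} gives $\hat{\mL}^k\geq\nu$ for all $k$, so $\{\hat{\mL}^k\}$ is non-increasing and bounded below, hence convergent. Telescoping the displayed decrease over $k=1,\ldots,\infty$ gives $\sum_k\eta_i^k\|\Delta x_i^{k+1}\|^2<\infty$ for each $i$ and $\sum_k\|\Delta y^{k+1}\|^2<\infty$. Invoking the hypothesis $\eta_i^k\geq\tilde\eta_i>0$ converts the first into $\sum_k\|\Delta x_i^{k+1}\|^2<\infty$. In particular the summands tend to zero, so $\Delta x_i^k\to 0$ for every $i\in[s]$ and $\Delta y^k\to 0$.

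For the third step I would use the dual recursion \eqref{eq:gammaB}, which reads $\tfrac{1}{\tau_1}\mB^\top\Delta\omega^{k+1}=(1-\tfrac{\tau_2}{\tau_1})\mB^\top\Delta\omega^k-\tfrac{\tau_2}{\tau_1}\Delta t^{k+1}$, equivalently $\mB^\top\Delta\omega^{k+1}=(\tau_1-\tau_2)\mB^\top\Delta\omega^k-\tau_2\Delta t^{k+1}$. Since $\Delta t^{k+1}=L_G(\Delta y^{k+1}-\Delta y^k)+\nabla G(y^k)-\nabla G(y^{k-1})$ satisfies $\|\Delta t^{k+1}\|\leq L_G(\|\Delta y^{k+1}\|+2\|\Delta y^k\|)$ and $\Delta y^k\to 0$, we get $\Delta t^{k+1}\to 0$. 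Taking norms, $\|\mB^\top\Delta\omega^{k+1}\|\leq|\tau_1-\tau_2|\,\|\mB^\top\Delta\omega^k\|+\tau_2\|\Delta t^{k+1}\|$; because $|\tau_1-\tau_2|<1$ by \eqref{parameter} and the perturbation $\tau_2\|\Delta t^{k+1}\|$ vanishes, the standard contraction-with-vanishing-perturbation argument (if $u_{k+1}\leq\rho u_k+\epsilon_k$ with $\rho\in[0,1)$ and $\epsilon_k\to0$, then $u_k\to0$) gives $\mB^\top\Delta\omega^k\to 0$. Finally, $\sigma_{\mB}>0$ from Assumption \ref{assump:theta} together with $\sigma_{\mB}\|\Delta\omega^k\|^2\leq\|\mB^\top\Delta\omega^k\|^2$ yields $\Delta\omega^k\to 0$.

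The main obstacle I anticipate is the bookkeeping in the first step: verifying that once the parameter conditions \eqref{parameter} are applied, the recursive inequality of Proposition \ref{prop:Lk-recursive} collapses \emph{exactly} into the telescoping form $\hat{\mL}^{k+1}\leq\hat{\mL}^k$ with a strictly positive residual in \emph{all} the primal gaps. The crucial points are the index matching of $B_1\eta_i^{k-1}$ inside $\hat{\mL}^k$ against $\gamma_i^k$ via $\gamma_i^k\leq B_1\eta_i^{k-1}$, and the strict positivity of the residual coefficients $(1-B_1)\eta_i^k$ and $(1-B_2)C_3$, which is exactly what makes the gaps summable; here the assumed uniform lower bound $\eta_i^k\geq\tilde\eta_i>0$ is essential to pass from $\sum_k\eta_i^k\|\Delta x_i^{k+1}\|^2<\infty$ to $\Delta x_i^k\to0$. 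By comparison, the $\Delta\omega$ step is routine once $\Delta y^k\to0$ has been established.
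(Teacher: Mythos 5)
Your proof is correct. The primal part (Steps 1--2) is essentially identical to the paper's argument: the same rearrangement of Proposition \ref{prop:Lk-recursive} under the conditions $\gamma_i^k \leq B_1\eta_i^{k-1}$ and $8C_2L_G^2 \leq B_2C_3$ into the descent inequality $\hat{\mL}^{k+1} + (1-B_1)\sum_i \eta_i^k\|\Delta x_i^{k+1}\|^2 + (1-B_2)C_3\|\Delta y^{k+1}\|^2 \leq \hat{\mL}^k$, followed by telescoping against the lower bound $\hat{\mL}^k \geq \nu$ from Lemma \ref{lemma:Lhat}, and the index bookkeeping you flag as the main risk is exactly right.

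Where you genuinely diverge from the paper is the multiplier step. The paper stays in the ``summability'' regime: it sums the squared inequality \eqref{eq:gammaB_4} over $k$, lets the $C_1$-weighted differences $\|\mB^\top\Delta\omega^k\|^2 - \|\mB^\top\Delta\omega^{k+1}\|^2$ telescope, and feeds in the \emph{square-summability} of $\{\Delta y^k\}$ obtained in the primal step, concluding $\sum_k \|\mB^\top \Delta\omega^k\|^2 < +\infty$ and hence $\Delta\omega^k \to 0$. You instead work with the unsquared linear recursion \eqref{eq:gammaB}, written as $\mB^\top\Delta\omega^{k+1} = (\tau_1-\tau_2)\mB^\top\Delta\omega^k - \tau_2\Delta t^{k+1}$, and invoke the standard contraction-with-vanishing-perturbation lemma with ratio $|\tau_1-\tau_2| < 1$. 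Both are valid. Your route is more elementary in its input --- it only needs $\Delta y^k \to 0$, not $\sum_k\|\Delta y^k\|^2 < \infty$ --- and suffices for the proposition as stated. The paper's route yields the strictly stronger conclusion $\sum_k\|\Delta\omega^k\|^2 < +\infty$, which is the kind of estimate that is reused later (the global convergence proof of Theorem \ref{global_conver} bounds $\|\Delta\omega^{k+1}\|$ by iterate gaps via the same mechanism in \eqref{temp7}); so if you intended your argument to feed into the K{\L}-based global analysis, the summable form would be the one to keep.
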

\begin{proof}
   From Proposition \ref{prop:Lk-recursive} we have 
\begin{equation}
\label{eq:Lhatrecursive}
    \hat{\mL}^{k+1} + (1-B_1)\sum_{i=1}^s\eta_i^k\|\Delta x_i^{k+1} \|^2 + (1-B_2)C_3 \|\Delta y^{k+1}\|^2 \leq \hat{\mL}^k,
\end{equation}
where $\hat{\mL}^k$ is defined in \eqref{Lhat}. For all $K\geq 1$, by summing \eqref{eq:Lhatrecursive} from $k=0$ to $K$ we get
\begin{equation}
\label{eq:Lhat_lower} \hat{\mL}^{K+1} + (1-B_1) \sum_{k=0}^K \sum_{i=1}^s\eta_i^k\|\Delta x_i^{k+1} \|^2 +  (1-B_2)C_3 \sum_{k=0}^K\|\Delta y^{k+1}\|^2 \leq\hat{\mL}^0.
\end{equation}
From \eqref{eq:Lhat_lower}, Lemma \ref{lemma:Lhat} and the assumption $\eta_i^k \geq\tilde\eta_i>0$ we derive that the sequence $\{\Delta y^k\}_{k\geq 0}$ and $\{\Delta x_i^k\}_{k\geq 0}$ converge to 0. 

From \eqref{eq:gammaB_4}, we have 
\[\begin{split}
&\sum_{k=0}^{K} \|\revise{\mB^\top}\Delta \omega^{k+1} \|^2  \leq \frac{|\tau_1-\tau_2|}{(1-|\tau_1-\tau_2|)} \| \revise{\mB^\top}\Delta \omega^{0}\|^2 \\
&\quad+ \frac{\tau_2^2/\tau_1 }{(1-|\tau_1-\tau_2|)(1-|1-\tau_2/\tau_1|)}\big(2L_G^2\sum_{k=0}^{K}\|\Delta y^{k+1}\|^2 + 8 L_G^2\sum_{k=0}^{K} \|\Delta y^{k}\|^2 \big).
\end{split}
\]
It implies that $\sum_{k=0}^{\infty}\|\Delta \omega^{k} \|^2 \leq \sum_{k=0}^{\infty} 1/\sigma_{\mB}\|\revise{\mB^\top}\Delta \omega^{k} \|^2 <+\infty$. Hence $\{\Delta \omega^k\}_{k\geq 0}$ converge to 0.
\end{proof}

\begin{theorem}[Subsequential convergence] 
\label{theorem-sub}
\revise{Consider Algorithm~\ref{alg:iADMMn} and the parameters are chosen to satisfy the conditions in \eqref{parameter}. The values of $\gamma_i^k$ and $\eta_i^k$  are summarized in Table~\ref{tab:my_label}.} Suppose there exist $\tilde\eta_i>0$ such that $\eta_i^k\geq \tilde\eta_i$ for all $i\in [s]$ and $k\geq 1$. And suppose  
 $a_i^k$ \revise{(which are defined in \eqref{eta_gamma1} if we use the update rule in \eqref{eq:xi_update_nosmooth}), defined in \eqref{eta_gamma2} if we use use the update rule in \eqref{eq:xi_update}, and defined in \eqref{eta_gamma3} if we use use the update rule in \eqref{eq:xi_update_2})}  are upper bounded\footnote{In fact, if the Lipschitz constants of $x_i\mapsto h_2(x)$ in Assumption \ref{assump:h2}(i),  or that of $x_i\mapsto \hat h(x)$ in Remark \ref{remark-2}, or that of  $x_i\mapsto F(x)$ in Remark \ref{remark_1} are upper bounded over the set that contains $\{x^k\}_{k\geq 0}$ then this assumption is satisfied. Note that $x_i^k \in \dom (f_i)$.}. It holds that if $(x^*,y^*,\omega^*)$ is a limit point  of the generated sequence of iADMMn then $(x^*,y^*,\omega^*)$ is an $\frac{1-\tau_1}{\tau_2 \beta}  \|\omega^*\|$-approximate stationary point of Problem  \eqref{CP-main}. When $\tau_1=1$, we have $(x^*,y^*)$ is a stationary point of Problem \eqref{CP-main} (or equivalently, $(x^*,y^*,\omega^*)$ is a critical point of $\mL_\beta$).
  \end{theorem}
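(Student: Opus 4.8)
The plan is to combine the vanishing of the successive differences, already established in Proposition~\ref{prop:delta_converge}, with a passage to the limit in the first-order optimality conditions of the three updates, and then to read off the residuals required by the $\varepsilon$-stationarity definition.

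First I would invoke Proposition~\ref{prop:delta_converge} (applicable because $\eta_i^k\ge\tilde\eta_i>0$) to get $\Delta x_i^k\to 0$, $\Delta y^k\to 0$ and $\Delta\omega^k\to 0$. Fixing a subsequence along which $(x^k,y^k,\omega^k)\to(x^*,y^*,\omega^*)$, the vanishing gaps force the shifted iterates $x^{k+1}$, the partial updates $x^{k,i-1}$, $y^{k+1}$ and $\omega^{k+1}$ to share the same limits along this subsequence; likewise $\bar x_i^k=x_i^k+\alpha_i^k\Delta x_i^k\to x_i^*$, using that $\alpha_i^k$ stays bounded (a consequence of the assumed boundedness of $a_i^k$ together with bounded Lipschitz and proximal constants $l_i^k,\kappa_i^k$). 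Passing to the limit in the multiplier update~\eqref{eq:omega_update} then yields $(1-\tau_1)\omega^*=\tau_2\beta\,(h(x^*)+\mB y^*)$, so the feasibility residual satisfies $\|h(x^*)+\mB y^*\|=\tfrac{1-\tau_1}{\tau_2\beta}\|\omega^*\|$. This computation also singles out the natural limiting multiplier $\hat\omega^*:=\omega^*+\beta(h(x^*)+\mB y^*)$, i.e.\ the limit of the quantity $\omega^k+\beta(h(x^{k+1})+\mB y^{k+1})$ that appears in every optimality condition.

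Next I would pass to the limit in the dual-feasibility conditions using $\hat\omega^*$ as the witnessing multiplier. From the $y$-optimality~\eqref{eq:nabM}, namely $\mB^\top(\omega^k+\beta(h(x^{k+1})+\mB y^{k+1}))+\nabla G(y^k)+L_G\Delta y^{k+1}=0$, continuity of $\nabla G$ and $\Delta y^{k+1}\to 0$ give $\nabla G(y^*)+\mB^\top\hat\omega^*=0$. For each block I would write the optimality of~\eqref{eq:update_xi_general}: there is $\xi_i^{k+1}\in\partial f_i(x_i^{k+1})$ with $\xi_i^{k+1}+\nabla_{x_i}h(x_i^{k+1},x^{k,i-1}_{\ne i})^\top(\omega^k+\beta\mB y^k)+\nabla_{x_i}u_i(x_i^{k+1},x^{k,i-1})-\mG_i^k=0$. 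Setting $\chi_i^{k+1}:=\xi_i^{k+1}+\nabla_{x_i}F(x_i^{k+1},x^{k,i-1}_{\ne i})\in\partial_{x_i}(f_i(x_i^{k+1})+F(x_i^{k+1},x^{k,i-1}_{\ne i}))$ and using (a) $\|\mG_i^k\|\le a_i^k\|\Delta x_i^k\|\to 0$, (b) the gradient-consistency of the surrogate (Definition~\ref{def:surrogate-block}(c)) together with joint continuity, which give $\nabla_{x_i}u_i(x_i^{k+1},x^{k,i-1})\to\nabla_{x_i}(F+\beta h_2)(x^*)$ since $x_i^{k+1}$ and the base point $x_i^k$ coalesce, and (c) continuity of $\nabla F$ and $\nabla h$, I obtain $\chi_i^{k+1}\to\chi_i:=-\nabla_{x_i}h(x^*)^\top\hat\omega^*$ (here I use $\nabla_{x_i}h_2(x^*)=\nabla_{x_i}h(x^*)^\top h(x^*)$). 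Closedness (outer semicontinuity) of the limiting subdifferential then places $\chi_i\in\partial_{x_i}(f_i(x_i^*)+F(x^*))$, so $\chi_i+\nabla_{x_i}h(x^*)^\top\hat\omega^*=0$.

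Collecting the three facts, with the multiplier $\hat\omega^*$ both dual residuals vanish while the primal residual equals $\tfrac{1-\tau_1}{\tau_2\beta}\|\omega^*\|$; hence $(x^*,y^*)$ is $\tfrac{1-\tau_1}{\tau_2\beta}\|\omega^*\|$-stationary, and when $\tau_1=1$ the primal residual is $0$, $\hat\omega^*=\omega^*$, and \eqref{cond:first-order} holds exactly, i.e.\ $(x^*,y^*,\omega^*)$ is a stationary point and a critical point of $\mL_\beta$. The hard part will be the block-$x_i$ limit: I must show that all inertial and proximal perturbation terms vanish (relying on the boundedness of $a_i^k$, $\kappa_i^k$ and $\alpha_i^k$), invoke surrogate gradient-consistency only in the limit where $x_i^{k+1}$ and $x_i^k$ merge onto the diagonal, and keep the limiting subgradient inside $\partial_{x_i}(f_i+F)$ via closedness of the subdifferential. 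Recognizing that the correct witness is the shifted multiplier $\hat\omega^*$ rather than $\omega^*$ itself is what reconciles the vanishing dual residuals with the nonzero feasibility residual $\tfrac{1-\tau_1}{\tau_2\beta}\|\omega^*\|$.
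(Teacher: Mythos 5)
Your overall skeleton matches the paper's: invoke Proposition~\ref{prop:delta_converge}, pass to the limit in \eqref{eq:omega_update} to get $h(x^*)+\mB y^*=\frac{1-\tau_1}{\tau_2\beta}\omega^*$, pass to the limit in \eqref{eq:nabM} for the $y$-residual, and recognize that the witnessing multiplier is the shifted $\hat\omega^*=\omega^*+\beta(h(x^*)+\mB y^*)$ (this is exactly what hides inside the paper's conclusion $0\in\partial\mL_\beta(x^*,y^*,\omega^*)$). However, your treatment of the block-$x_i$ conditions takes a different route from the paper — you pass to the limit in the \emph{first-order optimality conditions} and invoke outer semicontinuity of $\partial f_i$ — and as written this route has two genuine gaps. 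First, the limiting subdifferential of a merely proper lower semicontinuous $f_i$ is closed only under \emph{$f$-attentive} convergence: from $\xi_i^{k_n+1}\in\partial f_i(x_i^{k_n+1})$, $x_i^{k_n+1}\to x_i^*$, $\xi_i^{k_n+1}\to\xi^*$ you may conclude $\xi^*\in\partial f_i(x_i^*)$ only if you also know $f_i(x_i^{k_n+1})\to f_i(x_i^*)$, and nothing in your argument establishes this value convergence. The paper obtains it precisely from the step you skip: plugging $x_i=x_i^*$ into the minimization inequality \eqref{eq:c1} yields $\limsup_n f_i(x_i^{k_n})\le f_i(x_i^*)$, which combined with lower semicontinuity gives $f_i(x_i^{k_n})\to f_i(x_i^*)$. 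Without some such argument the closedness step can fail (standard counterexamples exist for lsc functions with downward jumps).

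Second, your step (b) asserts $\nabla_{x_i}u_i(x_i^{k+1},x^{k,i-1})\to\nabla_{x_i}\bigl(F+\beta h_2\bigr)(x^*)$ by "gradient consistency plus joint continuity," but Definition~\ref{def:surrogate-block} guarantees neither joint continuity of $\nabla_{x_i}u_i$ nor anything about $\nabla_{x_i}e_i$ off the diagonal: it only gives continuity of $u_i$, continuous differentiability of $x_i\mapsto e_i(x_i,z)$ for each fixed $z$, and $\nabla_{x_i}e_i=0$ \emph{on} the diagonal. In particular, in the update \eqref{eq:xi_update_nosmooth} the generic surrogate $\hat u_i$ of $F$ enters, and no hypothesis of Theorem~\ref{theorem-sub} controls $\nabla_{x_i}\hat u_i$ away from the diagonal (the paper only assumes such a bound, \eqref{temp-revise}, in the global-convergence Theorem~\ref{global_conver}, where the sequence is additionally assumed bounded). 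The paper sidesteps both issues by a value-based argument: it passes to the limit in the inequality \eqref{eq:c1} (which needs only continuity of $u_i$ and lsc of $f_i$), concludes that $x_i^*$ \emph{minimizes} the limiting problem $\min_{x_i}\varphi(x_i,x^*_{\ne i},y^*,\omega^*)+f_i(x_i)+e_i(x_i,x^*)$, and only then applies Fermat's rule at $x_i^*$, where $\nabla_{x_i}e_i(x_i^*,x^*)=0$ is available. To repair your proof you would either have to import that minimization-inequality step (which both fixes the $f$-attentive convergence and removes the need for gradient convergence off the diagonal), or add assumptions beyond those of the theorem.
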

  \begin{proof}
      We remind that the update rule of $x_i^k$ in \eqref{eq:xi_update_nosmooth}, \eqref{eq:xi_update} and  \eqref{eq:xi_update_2} are just special cases of \eqref{eq:update_xi_general}. In \eqref{eq:update_xi_general},  $u_i(x_i,z)$ is a block $x_i$ surrogate function of $F(x) + \frac{\beta}{2} \|h(x)\|^2$ and the formulas of $\mG$ in \eqref{mG1}, \eqref{mG2}, and \eqref{mG3}, which respectively correspond to the update in \eqref{eq:xi_update_nosmooth}, \eqref{eq:xi_update}, and \eqref{eq:xi_update_2}, satisfy the condition $\|\mG_i^k(x_i^k,x_i^{k-1}) \| \leq a_i^k \|\Delta x_i^k\|$. 
      
      From \eqref{eq:update_xi_general}, we know that the following inequality holds for all $x_i$
      \begin{equation}
     \label{eq:c1}
     \begin{split}
 &  f_i(x_i^{k+1})  +  \langle h(x^{k+1}_i,x^{k,i-1}_{\ne i}),  \omega^k + \beta\mB y^k\rangle + u_i(x_i^{k+1},x^{k,i-1})  \\
 &\leq f_i(x_i)  +  \langle h(x_i,x^{k,i-1}_{\ne i}),  \omega^k + \beta\mB y^k\rangle + u_i(x_i,x^{k,i-1})\\
 &\qquad \qquad \qquad-\langle \mG_i^k(x_i^k,x_i^{k-1}),x_i-x_i^{k+1}\rangle. 
\end{split}
 \end{equation}
Let $\{(x^{k_n},y^{k_n},\omega^{k_n})\}$ be a subsequence that converges to $(x^*,y^*,\omega^*)$. As $\Delta x^k_i\to 0$ (see Proposition \ref{prop:delta_converge}), we have $x_i^{k_n+1}$, $x_i^{k_n-1}$ and $x_i^{k_n-2}$ also converge to $x_i^*$ for all $i\in [s]$. Consequently, $\|\mG_i^k(x_i^{k_n-1},x_i^{k_n-2})\|\to 0$.  Taking $x_i=x_i^*$ and $k=k_n-1$ in \eqref{eq:c1}, we obtain 
$ 
\limsup_{n\to \infty} f_i(x_i^{k_n}) \leq f_i(x_i^*),
$
which implies $f_i(x_i^{k_n})\to f_i(x_i^*)$ as $f_i$ is lower semi-continuous. Taking $k=k_n-1$ in \eqref{eq:c1} and let $k_n\to \infty$, we obtain the following inequality for all $x_i$
 \begin{equation}
\label{temp2}
\begin{split}
 &  f_i(x_i^*)  +  \langle h(x^*),  \omega^* + \beta\mB y^*\rangle + u_i(x_i^{*},x^{*})  \\
 &\leq f_i(x_i)  +  \langle h(x_i,x^{*}_{\ne i}),  \omega^* + \beta\mB y^*\rangle + u_i(x_i,x^{*}).
 \end{split}
\end{equation}
Note that $u_i(x_i^*,x^*)=F(x^*)+\frac{\beta}{2}\|h(x^*)\|^2$. Hence, \eqref{temp2} implies that for all $x_i$ we have
\begin{equation}
\label{temp30}
\begin{split}
   \varphi(x^*,y^*,\omega^*) + f_i(x_i^*)& \leq    \varphi(x_i,x^*_{\ne i},y^*,\omega^*) + f_i(x_i) \\
   &\qquad+ u_i(x_i,x^*)-F(x_i,x^*_{\ne i})-\frac{\beta}{2}\|h(x_i,x^*_{\ne i})\|^2.
    \end{split}
\end{equation}
Let $ e_i(x_i,z)=u_i(x_i,z)-F(x_i,z_{\ne i})-\frac{\beta}{2}\|h(x_i,z_{\ne i})\|^2$. We have $e_i(x_i^*,x^*)=0$. It follows from \eqref{temp30} that $x_i^*$ is a solution of 
$$\min_{x_i} \varphi(x_i,x^*_{\ne i},y^*,\omega^*) + f_i(x_i) +  e_i(x_i,x^*).
$$
Writing the optimality condition for this problem and noting that $\nabla_{x_i}e_i(x^*_i,x^*)=0$ for all the three surrogates in \eqref{ui1}, \eqref{ui2}, and \eqref{ui3} that respectively corresponds to the update rule of $x_i$ in \eqref{eq:xi_update_nosmooth}, \eqref{eq:xi_update} and  \eqref{eq:xi_update_2}, we obtain 
$$0\in\partial_{x_i} \revise{\mL_\beta}(x^*,y^*,\omega^*)= \partial_{x_i} (\varphi (x^*,y^*,\omega^*) + f_i(x_i^*)).$$ 
Since the essence of the update rule of $y$ is to minimize a block $y$ surrogate function of $\varphi$ (see \eqref{eq:y_update}), we can use the same reasoning for $x_i$ to prove that $0\in \nabla_y \revise{\mL_\beta}(x^*,y^*,\omega^*)$. 

From \eqref{eq:omega_update} we have $h(x^{k_n})+\mB y^{k_n} = \frac{1}{\tau_2 \beta}(\Delta \omega^{k_n} + (1-\tau_1) \omega^{{k_n}-1})$. Furthermore, $\Delta \omega^{k_n}\to 0$ (see  Proposition \ref{prop:delta_converge}), which implies $\omega^{{k_n}-1}\to \omega^*$. 
Hence, we have $$h(x^*) + \mB y^*=\frac{1-\tau_1}{\tau_2 \beta} \omega^*.$$ 
The result follows then. 
  \end{proof}

\begin{theorem}[Global convergence]
\label{global_conver}
 Suppose the conditions for Theorem \ref{theorem-sub} are satisfied and we also assume that $\Theta(x,y)$ has the K{\L} property, the generated sequence $\{(x^k,y^k,\omega^k)\}$ is bounded, and let $\tau_1=\tau_2=1$. Furthermore, together with the existence of the constants $\tilde \eta_i$ in Theorem \ref{theorem-sub} we assume there exist $\bar \eta_i>0$ such that $\eta_i^k \leq \bar \eta_i$  for all $i\in [s]$ and $k\geq 0$, and the constant $B_1$ in \eqref{parameter} satisfies $B_1<\min\{\tilde \eta_i /\bar \eta_i\}$. 
   Then the whole sequence $\{ (x^k,y^k,\omega^k)\}$ converges to a critical point of $\mL_\beta$. 
\end{theorem}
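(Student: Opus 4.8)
The plan is to apply the three-step Lyapunov/K{\L} recipe outlined at the beginning of Section~\ref{sec:analysis}, using the potential in \eqref{Lhat} but with the iteration-dependent weights $\eta_i^{k-1}$ frozen at their uniform upper bounds $\bar\eta_i$. Since $\tau_1=\tau_2=1$, the term $\frac{1-\tau_1}{2\tau_2\beta}\|\omega^k\|^2$ in \eqref{Lhat} disappears, so I would introduce the \emph{fixed} potential
\[
\Xi^k:=\revise{\mL_\beta}(x^k,y^k,\omega^k)+\sum_{i=1}^s B_1\bar\eta_i\|\Delta x_i^k\|^2+C_1\|\revise{\mB^\top}\Delta\omega^k\|^2+B_2C_3\|\Delta y^k\|^2,
\]
regarded as $\Xi(\mathbf z^k)$ with the augmented variable $\mathbf z^k=(x^k,y^k,\omega^k,x^{k-1},y^{k-1},\omega^{k-1})$. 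Starting from Proposition~\ref{prop:Lk-recursive} (whose $\|\omega\|^2$ terms vanish for $\tau_1=1$) and inserting the conditions $\gamma_i^k\le B_1\eta_i^{k-1}\le B_1\bar\eta_i$ and $8C_2L_G^2\le B_2C_3$ from \eqref{parameter}, I would obtain the descent inequality
\[
\Xi^{k+1}+\sum_{i=1}^s(\tilde\eta_i-B_1\bar\eta_i)\|\Delta x_i^{k+1}\|^2+(1-B_2)C_3\|\Delta y^{k+1}\|^2\le\Xi^k .
\]
Here the coefficients are strictly positive precisely because of the two extra hypotheses of this theorem, namely $\eta_i^k\ge\tilde\eta_i$ together with $B_1<\min_i\{\tilde\eta_i/\bar\eta_i\}$ and $B_2\in(0,1)$; these are exactly the ingredients that convert the recursion for the varying potential $\hat{\mL}^k$ into a genuine descent for the fixed potential $\Xi^k$. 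Since $\Xi^k\ge\hat{\mL}^k\ge\nu$ by Lemma~\ref{lemma:Lhat}, $\{\Xi^k\}$ is non-increasing and bounded below, hence convergent to some $\Xi^\ast$; summing the descent inequality gives $\sum_k\|\Delta x_i^{k+1}\|^2<\infty$ and $\sum_k\|\Delta y^{k+1}\|^2<\infty$, and by Proposition~\ref{prop:delta_converge} also $\sum_k\|\Delta\omega^{k+1}\|^2<\infty$.

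The step I expect to be the main obstacle is the subgradient lower bound, i.e.\ constructing an element $d^{k+1}\in\partial\Xi(\mathbf z^{k+1})$ whose norm is controlled by consecutive iterate gaps. The block-$x_i$ component would be obtained by subtracting the optimality condition of the generic update \eqref{eq:update_xi_general}, which lives at the intermediate points $x^{k,i-1}$, $\omega^k$, $\mB y^k$, from the true partial subgradient $\partial_{x_i}\revise{\mL_\beta}(x^{k+1},y^{k+1},\omega^{k+1})$, and bounding the mismatch through $\|\mG_i^k(x_i^k,x_i^{k-1})\|\le a_i^k\|\Delta x_i^k\|$, the gradient-consistency property $\nabla_{x_i}e_i(x_i^{k+1},x^{k+1})=0$ of the surrogates \eqref{ui1}, \eqref{ui2}, \eqref{ui3}, and the Lipschitz continuity of $\nabla F$, $\nabla h$ and $\nabla h_2$ (made uniform by the assumed boundedness of the sequence, which is exactly where that hypothesis is used). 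The $y$-component follows from the optimality condition \eqref{eq:nabM} and the $L_G$-Lipschitz continuity of $\nabla G$, the $\omega$-component from the identity $h(x^{k+1})+\mB y^{k+1}=\tfrac1\beta\Delta\omega^{k+1}$ (the $\tau_1=\tau_2=1$ case of \eqref{eq:omega_update}), and the derivatives of the three quadratic auxiliary terms of $\Xi$ add further contributions of order $\|\Delta x^{k+1}\|+\|\Delta y^{k+1}\|+\|\Delta\omega^{k+1}\|$. Assembling the pieces across all blocks, I would reach an estimate of the form $\mathrm{dist}\big(0,\partial\Xi(\mathbf z^{k+1})\big)\le C\big(\sum_i\|\Delta x_i^{k+1}\|+\sum_i\|\Delta x_i^{k}\|+\|\Delta y^{k+1}\|+\|\Delta\omega^{k+1}\|+\|\Delta\omega^{k}\|\big)$ for some constant $C>0$.

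Finally I would run the standard K{\L} argument. Because $\Theta$ has the K{\L} property and $\Xi$ differs from $\revise{\mL_\beta}$ only by the smooth coupling and penalty terms and by finitely many quadratics in the auxiliary variables, $\Xi$ is itself a K{\L} function (the relevant class, e.g.\ semialgebraic or definable functions, is closed under addition of such terms). By Theorem~\ref{theorem-sub} the limit-point set is nonempty, and along it $\Xi$ takes the constant value $\Xi^\ast$; I would then apply the uniformized K{\L} inequality on a neighbourhood of this set and combine it with the previous two steps in the usual way: concavity of the desingularizing function $\psi$ gives $\psi(\Xi^k-\Xi^\ast)-\psi(\Xi^{k+1}-\Xi^\ast)\ge(\Xi^k-\Xi^{k+1})/\mathrm{dist}(0,\partial\Xi(\mathbf z^k))$, and after an arithmetic-geometric-mean estimate this telescopes to the finite-length property $\sum_k\|\mathbf z^{k+1}-\mathbf z^k\|<\infty$. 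In particular $\sum_k(\|\Delta x^k\|+\|\Delta y^k\|+\|\Delta\omega^k\|)<\infty$, so $\{(x^k,y^k,\omega^k)\}$ is a Cauchy sequence and converges; by Theorem~\ref{theorem-sub} with $\tau_1=1$ its limit is a critical point of $\revise{\mL_\beta}$, which is the desired conclusion.
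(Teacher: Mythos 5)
Your proposal is correct and follows essentially the same route as the paper: the same three-step Lyapunov/K{\L} recipe, with the sufficient decrease obtained from Proposition~\ref{prop:Lk-recursive} together with the conditions $\tilde\eta_i\le\eta_i^k$, $\gamma_i^k\le B_1\bar\eta_i$, $8C_2L_G^2\le B_2C_3$, and the subgradient bound obtained from the optimality conditions of \eqref{eq:update_xi_general}, \eqref{eq:nabM}, and \eqref{eq:omega_update} on the bounded set containing the iterates. Your potential $\Xi^k$ differs from the paper's Lyapunov function \eqref{lyapunov} only in the (inessential) choice of the auxiliary quadratic coefficients ($B_1\bar\eta_i$ and $B_2C_3$ versus $\tfrac{\tilde\eta_i+B_1\bar\eta_i}{2}$ and $\tfrac{(1+B_2)C_3}{2}$), and both satisfy the same descent inequality, so the arguments are interchangeable.
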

Before proving Theorem \ref{global_conver}, let us have some discussion on the assumptions used for Theorem \ref{global_conver}.
\begin{itemize} 
\item  If $h(x)=\mathcal A x=\sum_{i=1}^s \mathcal A_i x_i$, where $\mathcal A_i$ are linear mappings, and we use the surrogate in \eqref{ui1} then  $l_i^k=\|\revise{\mathcal A_i^\top} \mathcal A_i\|$. In this case, we simply choose $\kappa_i^k$ in the update \eqref{eq:xi_update_nosmooth} to be any constant $\kappa_i \geq \|\revise{\mathcal A_i^\top} \mathcal A_i\|$, then $\eta_i^k$ in Proposition \ref{prop:nsdp_xi} are constants. Hence, $\tilde \eta_i/\bar\eta_i=1$. 
\item It is important to note that we require $\tau_1=\tau_2=1$ for the global convergence, but this condition is not required for Theorem \ref{theorem-sub}. 
 
\item The boundedness assumption of $\{(x^k,y^k,\omega^k)\}$ is necessary for the upcoming proof. Totally similarly to \cite[Proposition 7]{mADMM2022}, we can prove that if $\tau_1=1$, $ran\, h(x)  \, \subseteq Im(\mB) $, $\lambda_{\min} (\mB^{\revise{\top}} \mB)>0$, and $\Theta(x,y)$ is coercive over the feasible set $\{(x,y): h(x) + \mB y=0\}$ then the generated sequence $  \{(x^k,y^k,\omega^k)\}$ is bounded. We omit the proof of this property. 
\item Once the global convergence of iADMMn is guaranteed, by using the same technique as in~\cite[Theorem~2]{Attouch2009} (see \cite[Theorem~2.9]{Xu2013} and  \cite[Theorem~3]{Hien_ICML2020} for some examples of using this technique to establish the convergence rate), we can establish a convergence rate for iADMMn.  The type of convergence rate depends on the value of the K{\L}  exponent.  
\revise{More specifically, when the K{\L}  exponent is 0, the algorithm converges after a finite number of steps,
when the K{\L}  exponent is in $(0, 1/2]$, the algorithm has linear convergence, and when the K{\L}  exponent is in $(1/2, 1)$, the
algorithm has sublinear convergence. The estimation of the K{\L} exponent falls beyond the scope of this paper. 
}
\end{itemize}
We now prove Theorem \ref{global_conver}. 
\revise{  As discussed  at the beginning of Section~\ref{sec:analysis}, we use the same methodology employed in \cite{Bolte2018_ADMM,Hien_iADMM,Titan2020,Hien_BMME2022,Ochs2014,Wang2019,Xu2013,Yashitini2022} to prove the global convergence of iADMMn, which comprises of three main steps (i) derive sufficient decrease property of a Lyapunov function, (ii) derive a subgradient lower bound for the iterates gap, and (iii) using the K{\L} property. Although the general methodology is the same in these papers as its essence is originally from the  general proof recipe of \cite{Attouch2013,Bolte2014}, how to choose a Lyapunov function and how to establish the two properties (i) the sufficient decrease and (ii) the boundedness of subgradient are different in these papers since these steps highly rely on the structure of the problem. In the following, we will prove these two properties. The remaining step to obtain the global convergence are totally similar to the proof of \cite[Theorem 2]{Hien_iADMM}; hence, we omit the details. }
      \begin{proof}
      Denote $z=(x,y,\omega)$. We use the following Lyapunov function 
        \begin{equation}
            \label{lyapunov}
            \begin{split}
            \bar \mL(z,\tilde z)&=\mL_\beta(x,y,\omega) + \sum_{i=1}^s \frac{\tilde\eta_i+B_1 \bar\eta_i}{2}\|x_i-\tilde x_i\|^2\\
            &\qquad\qquad+ \frac{(1+B_2)C_3}{2}\|y-\tilde y\|^2 + C_1\|\mB^{\revise{\top}} (\omega - \tilde \omega)\|^2. 
            \end{split}
        \end{equation}
        
        \paragraph{Sufficient decrease property} From Proposition \ref{prop:Lk-recursive} and the \revise{condition} $\tilde \eta_i \leq \eta_i^k$, $\gamma_i^k\leq B_1 \eta_i^{k-1}\leq B_1 \bar\eta_i$, and $8C_2 L_G^2\leq B_2 C_3$,  we obtain 
       \[ 
       \begin{split}
      & \bar\mL(z^{k+1},z^{k}) +\sum_{i=1}^s \frac{\tilde\eta_i-B_1 \bar\eta_i}{2}\big(\|\Delta x_i^{k+1}\|^2 + \|\Delta x_i^k\|^2\big)  \\
       &\qquad\qquad+ \frac{(1-B_2)C_3}{2} \big(\|\Delta y^{k+1}\|^2 + \|\Delta y^k\|^2\big)\leq \bar\mL(z^{k},z^{k-1}).
       \end{split}
       \]
        \paragraph{Boundedness of subgradient} In the following, we will work on the bounded set that contains the generated sequence (as we assume the generated sequence is bounded). 
        The optimality condition of \eqref{eq:update_xi_general} gives us 
        \begin{equation}
       \label{temp3}
        \begin{split}
              &\mG_i^k(x_i^k,x_i^{k-1}) -\nabla_{x_i} h(x_i^{k+1},x^{k,i-1}_{\ne i})^\top( \omega^k+\beta \mB y^k)\\
              &\qquad\in \revise{ \partial_{x_i} f_i(x_i^{k+1}) + \nabla_{x_i} u_i(x^{k+1}_i,x^{k,i-1}).}
        \end{split}
        \end{equation}
       \revise{Note that $ \nabla_{x_i} u_i(x_i^{k+1},x^{k+1})=\nabla_{x_i}\big(F(x^{k+1})+ \beta h_2(x^{k+1})\big)$, $\nabla_{x_i} u_i(\cdot,\cdot)$ is continuously differentiable, and we are working on the bounded set containing the generated sequence. So there exists a constant $\hat L_i$ such that 
       \begin{equation}
       \label{temp-revise}\|\nabla_{x_i} u_i(x_i^{k+1},x^{k,i-1}) - \nabla_{x_i}\big(F(x^{k+1})+ \beta h_2(x^{k+1})\big)\| \leq \hat  L_i \|x^{k+1}-x^{k,i-1}\|.
       \end{equation} } 
        From \eqref{temp3} we know that there exists $d_i^{k+1} \in \partial_{x_i} f_i(x_i^{k+1})$ 
        such that 
        \begin{equation} 
        \label{temp4}
        \mG_i^k(x_i^k,x_i^{k-1}) -\nabla_{x_i} h(x_i^{k+1},x^{k,i-1}_{\ne i})^\top ( \omega^k+\beta \mB y^k) = d_i^{k+1} + \revise{\nabla_{x_i} u_i(x^{k+1}_i,x^{k,i-1})}.
        \end{equation}
        Since $h_2$ is continuously differentiable and 
        \begin{equation*}
        \begin{split}
            \partial_{x_i} \mL_\beta(z^{k+1})&=\partial_{x_i}f_i(x_i^{k+1}) + \nabla_{x_i}\big(F(x^{k+1})+\beta h_2(x^{k+1})\big) \\
            & \qquad\qquad+ \nabla_{x_i} h(x^{k+1})^\top (\omega^{k+1} + \beta \mB y^{k+1}),
            \end{split}
        \end{equation*}
        we have \revise{(denote $\bar \xi_i^{k+1}= \nabla_{x_i}\big(F(x^{k+1})+ \beta h_2(x^{k+1})\big)$ \[
       D_i^{k+1}:=d_i^{k+1} + \bar \xi_i^{k+1} + \nabla_{x_i} h(x^{k+1})^\top (\omega^{k+1} + \beta \mB y^{k+1}) \in \partial_{x_i}\mL_\beta(z^{k+1}).\]} 
        Together with \eqref{temp4} and \eqref{temp-revise} we get
        \begin{equation}
        \label{temp5}
        \begin{split}
            \|D_i^{k+1}\|&=\|d_i^{k+1} + \xi_i^{k+1} +  \bar\xi_i^{k+1}-\xi_i^{k+1} + \nabla_{x_i} h(x^{k+1})^\top (\omega^{k+1} + \beta \mB y^{k+1})\|\\
            &\leq \big\|\mG_i^k(x_i^k,x_i^{k-1}) -\nabla_{x_i} h(x_i^{k+1},x^{k,i-1}_{\ne i})^\top ( \omega^k+\beta \mB y^k) \\
            &\quad+ \nabla_{x_i} h(x^{k+1})^\top (\omega^{k+1} + \beta \mB y^{k+1}) \big\| + \| \bar\xi_i^{k+1}- \revise{\nabla_{x_i} u_i(x^{k+1}_i,x^{k,i-1})} \|\\
            &\leq \|\mG_i^k(x_i^k,x_i^{k-1})\|+ \|\nabla_{x_i} h(x^{k+1})^\top \| (\|\Delta \omega^{k+1}\|+\beta \|\mB \Delta y^{k+1}\|) \\
            &\qquad\qquad+\hat L_i \|x^{k+1}-x^{k,i-1}\|.
        \end{split}
        \end{equation}
  On the other hand, as  $h_i$, $i=1,\ldots,q$, are continuously differentiable, hence $\|\nabla_{x_i} h(x^{k+1})^\top\|$ is bounded on the bounded set that contains the generated sequence. Since $\tau_1=\tau_2=1$, we get from \eqref{eq:gammaB_4}   that 
  \begin{equation} 
 \label{temp7}
 \|\Delta \omega^{k+1}\|^2 \leq \frac{1}{\sigma_{\mB} }\|\revise{\mB^\top}\Delta \omega^{k+1} \|^2 \leq \frac{1}{\sigma_{\mB} }(2L_G^2\|\Delta y^{k+1}\|^2 + 8 L_G^2 \|\Delta y^{k}\|^2). 
 \end{equation}    
  Therefore, from \eqref{temp5} we derive that 
 \begin{equation} 
 \label{temp6} 
 \|D_i^{k+1}\|\leq a_1( \|\Delta x^{k+1}\| +  \|\Delta x^{k}\|+\|\Delta y^{k+1}\|+ \|\Delta y^{k}\|)
 \end{equation} for some positive constant $a_1$. 
  Note that 
  $$D_y^{k+1}:=\nabla G(y^{k+1}) + \mB^{\revise{\top}} (\omega^{k+1} + \beta( h(x^{k+1})+\mB y^{k+1})) \in \nabla_y \mL_\beta (z^{k+1}),
  $$ 
  which together with\eqref{eq:nabM} and \eqref{temp7} leads to 
 \begin{equation}
 \label{temp8}
     \begin{split}
      \|D_y^{k+1}\| &= \|\nabla G(y^{k+1})- \nabla G(y^{k})+  \mB^{\revise{\top}}  \Delta \omega^{k+1}-L_G \Delta y^{k+1}\|   \\
      &\leq 2L_G \| \Delta y^{k+1}\| + \|\mB^{\revise{\top}} \Delta \omega^{k+1}\|  \\
      &\leq  a_2( \| \Delta y^{k+1}\|+\| \Delta y^{k}\|)
     \end{split}
 \end{equation} 
 for some positive constant $a_2$. On the other hand, we have
 \begin{equation}
     \label{temp9}
 \|\nabla_\omega  \mL_\beta (z^{k+1}) \| = \| h(x^{k+1})+\mB y^{k+1} \|=1/\beta\|\Delta \omega^{k+1}\|, \end{equation}
  and
        \begin{equation} 
        \label{partial_Lbar}
        \begin{split} 
        \partial \bar\mL(z, \tilde{z})=\partial \mL_\beta( z)+ &\partial \Big( \sum_{i=1}^s \frac{\tilde\eta_i + B_1 \bar\eta_i}{2}\|x_i - \tilde x_i \|^2+ B_2C_3 \|y-\tilde y\|^2 \\
        &\qquad + C_1\|\mB^{\revise{\top}} (\omega - \tilde \omega)\|^2 \Big).
        \end{split}
        \end{equation}
        Therefore, from \eqref{partial_Lbar}, \eqref{temp9}, \eqref{temp7},  \eqref{temp8}, and \eqref{temp5}, it is not difficult to derive that 
$$\|D^{k+1}\| \leq a_3( \|\Delta x^{k+1}\| +  \|\Delta x^{k}\|+\|\Delta y^{k+1}\|+ \|\Delta y^{k}\|), $$
for some positive constant $a_3$ and $D^{k+1}\in \partial \bar\mL(z^{k+1},z^k)$. 
    \end{proof}

\section{Numerical results} \revise{In this section, we test iADMMn on Problem \eqref{log_MF}. All tests are performed  using Matlab
R2021b on a laptop 2.5 GHz Intel Core i5
of 16GB RAM. The code is available from \url{https://github.com/LeThiKhanhHien/iADMMn}. 

Problem~\eqref{log_MF} can be rewritten in the form of \eqref{CP-main} as follows.
\begin{equation}
    \label{log_MF_2}
    \begin{split}
       \min_{U\in\mbbR^{m\times r},V\in\mbbR^{r\times n}} &  \quad F(U,V) + G(W)\\
      s.t\quad &  UV-W=0,
    \end{split}
\end{equation}
where $F(U,V)=\frac{\lambda_d}{2} \|U\|_F^2 + \frac{\lambda_t}{2}\|V\|_F^2$ and $G(W)=\sum_{i,j} (1+c y_{ij}-y_{ij})  \log (1+\exp(W_{ij}))-c y_{ij} W_{ij}$. The augmented Lagrangian of Problem~\eqref{log_MF_2} is 
$$
\mL_\beta(U,V,W,\omega)= F(U,V) + G(W) + \langle UV-W,\omega\rangle + \frac{\beta}{2}\|UV-W\|^2.$$
The update in \eqref{eq:xi_update_nosmooth} for $U$ (we keep $F$ as a surrogate of itself) is 
\[\begin{split}
U^{k+1}&\in\arg\min_{U}\Big\{\frac{\lambda_{d}}{2}\|U\|_{F}^{2}+\langle U,(w-\beta W)V^{\top}\rangle+\beta\langle U_{ex}VV^{\top},U\rangle\\
&\qquad\qquad\qquad+\frac{\beta\|VV^{\top}\|}{2}\|U-U_{ex}\|^{2}\Big\}\\
&=\frac{\beta\|VV^{\top}\|}{\beta\|VV^{\top}\|+\lambda_{d}}U_{ex}-\frac{1}{\beta\|VV^{\top}\|+\lambda_{d}}wV^{\top}-\frac{\beta(U_{ex}V-W)V^{\top}}{\beta\|VV^{\top}\|+\lambda_{d}},
\end{split}
\]
where $W=W^k, V=V^k, \omega=\omega^k$ and $U_{ex}=U^k + \alpha_U^k (U^k-U^{k-1})$. Similarly, the update of $V$ is 
$$V^{k+1}=\frac{\beta\|U^{\top}U\|}{\beta\|U^{\top}U\|+\lambda_{t}}V_{ex}-\frac{1}{\beta\|U^{\top}U\|+\lambda_{t}}U^{\top}w-\frac{\beta U^{\top}(UV_{ex}-W)}{\beta\|U^{\top}U\|+\lambda_{t}},
$$
where $U=U^{k+1}, W=W^k, \omega=\omega^k$, and $V_{ex}=V^k + \alpha_V^k (V^k-V^{k-1})$. The update of $W$ in \eqref{eq:y_update} is 
$$
W^{k+1}=\frac{1}{\beta+L_{G}}\left(L_{G}W^k-\nabla 
 G(W^k)+\omega^k+\beta U^{k+1}V^{k+1}\right),$$ 
and the update of $\omega$ is 
$$\omega^{k+1} = \tau_1 \omega^k + \tau_2 \beta (U^{k+1}V^{k+1}-W^{k+1}).
$$
We choose the following extrapolation parameters satisfying $\gamma_i^k \leq B_1 \revise{\eta_i^{k-1}}$:
\begin{equation*}
\begin{array}{ll}
B_1=0.9999, t_0=1, t_k=\frac12(1+\sqrt{1+4t_{k-1}^2}), \\
\alpha_U^k =\min\Big\{\frac{t_{k-1}-1}{t_k},B_1\sqrt{ \frac{\|V^{k-1}(V^{k-1})^\top\|}{ \|V^{k}(V^{k})^\top\|}}\Big\}, \\
\alpha_V^k =\min\Big\{\frac{t_{k-1}-1}{t_k},B_1\sqrt{ \frac{\|(U^{k-1})^\top U^{k-1}\|}{ \|(U^{k})^\top U^{k}\|}}\Big\}.
\end{array}
\end{equation*}

To generate a sparse data $Y\in\mbbR^{m\times n}$ with each element $y_{ij}\in \{0,1\}$, we use  Matlab command \texttt{sprand(m,n,s);} and assign \texttt{Y(Y>0)=1;} In the upcoming experiments, we set $s=0.1$ (that is, 90\% of the elements of $Y$ are 0), $r=100$, $c=1$, $\lambda_d=\lambda_t=\frac{1}{4}$, and $\beta=1$. For each size $(m,n)\in\{(200,200), (200,1000), (1000,200)\}$, we generate 5 random  sparse matrices $Y$. And for each $Y$, we generate 5 random initial points. We run each algorithm with the same initial point and the same running time: 10 seconds for the 
size $(m,n)=(200,200)$ and 25 seconds for the size $(m,n)\in \{(200,1000), (1000,200)\}$.  

We compare iADMMn with GD - the alternating gradient descent method, which alternatively updates $U$ and $V$ by gradient descent step, and implement 
three iADMMn versions corresponding to $(\tau_1,\tau_2)\in \{(0.1,0.1), (0.5,0.5), (1,1)\}$, together with their non-inertial versions. 
We compute mean of the objective values of Problem~\eqref{log_MF} over 25 trials (5 random datasets and 5 random initial points) and report the evolution of their log with respect to time in Figure~\ref{fig:mean}. We also report the mean $\pm$ std of the final objective values in Table \ref{tab:report}. 

\begin{figure}
    \centering
    \begin{tabular}{cc}
      (m,n)=(200,200) & (m,n)=(200,1000)
     \\ 
    \includegraphics[scale=0.35]{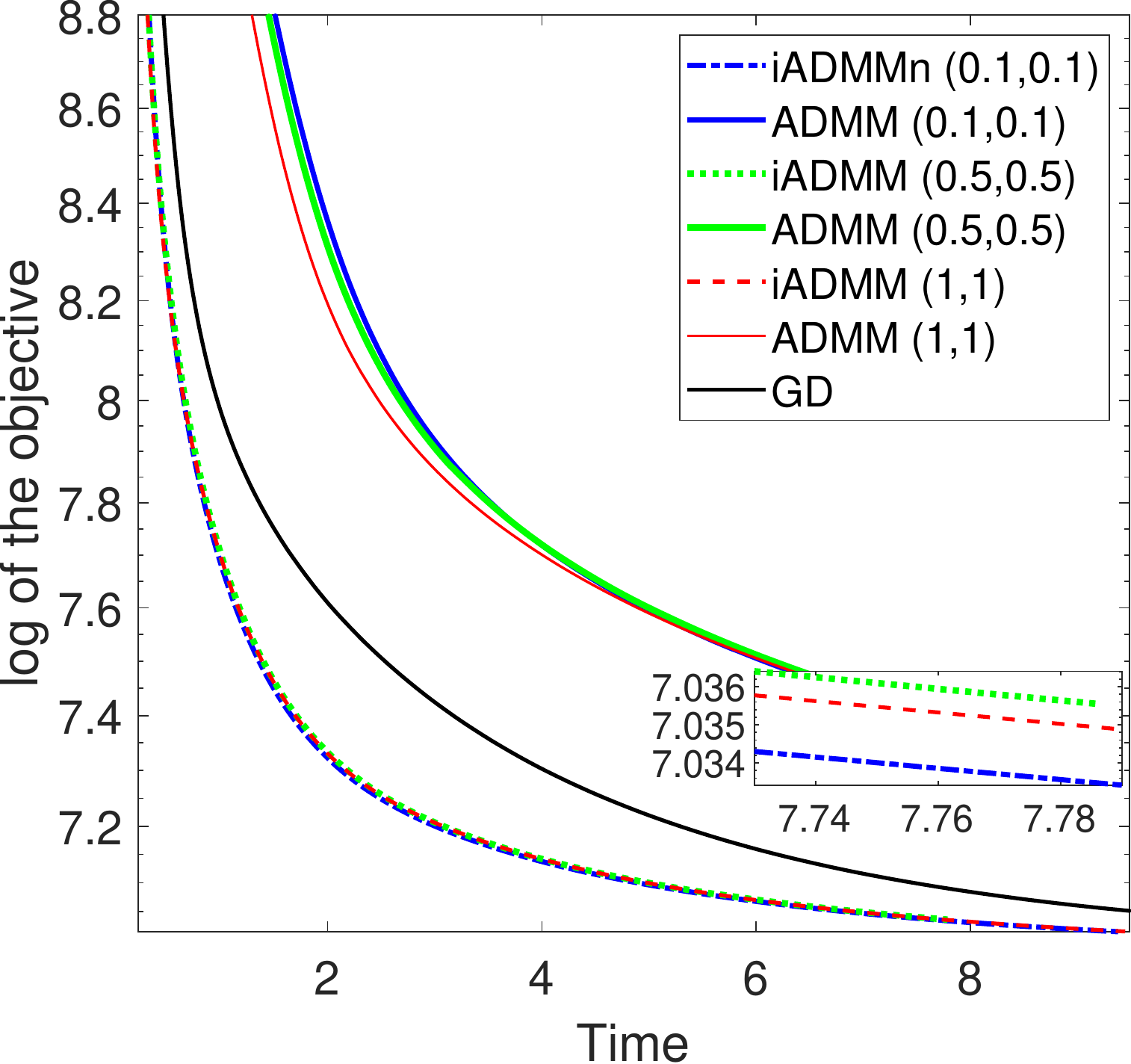}  &\includegraphics[scale=0.35]{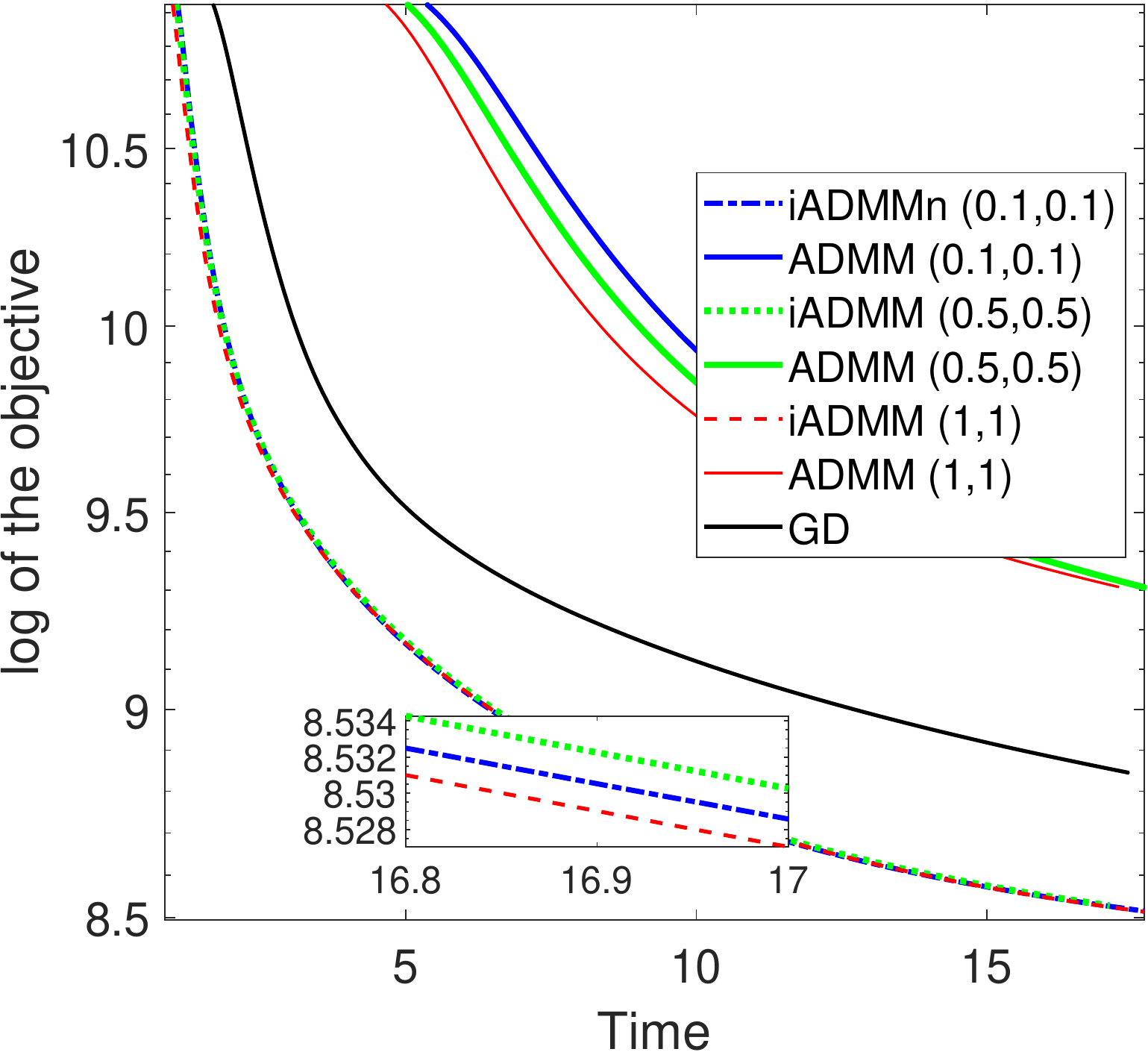}\\
    \multicolumn{2}{c}{(m,n)=(1000,200) } \\
         \multicolumn{2}{c}{\includegraphics[scale=0.35]{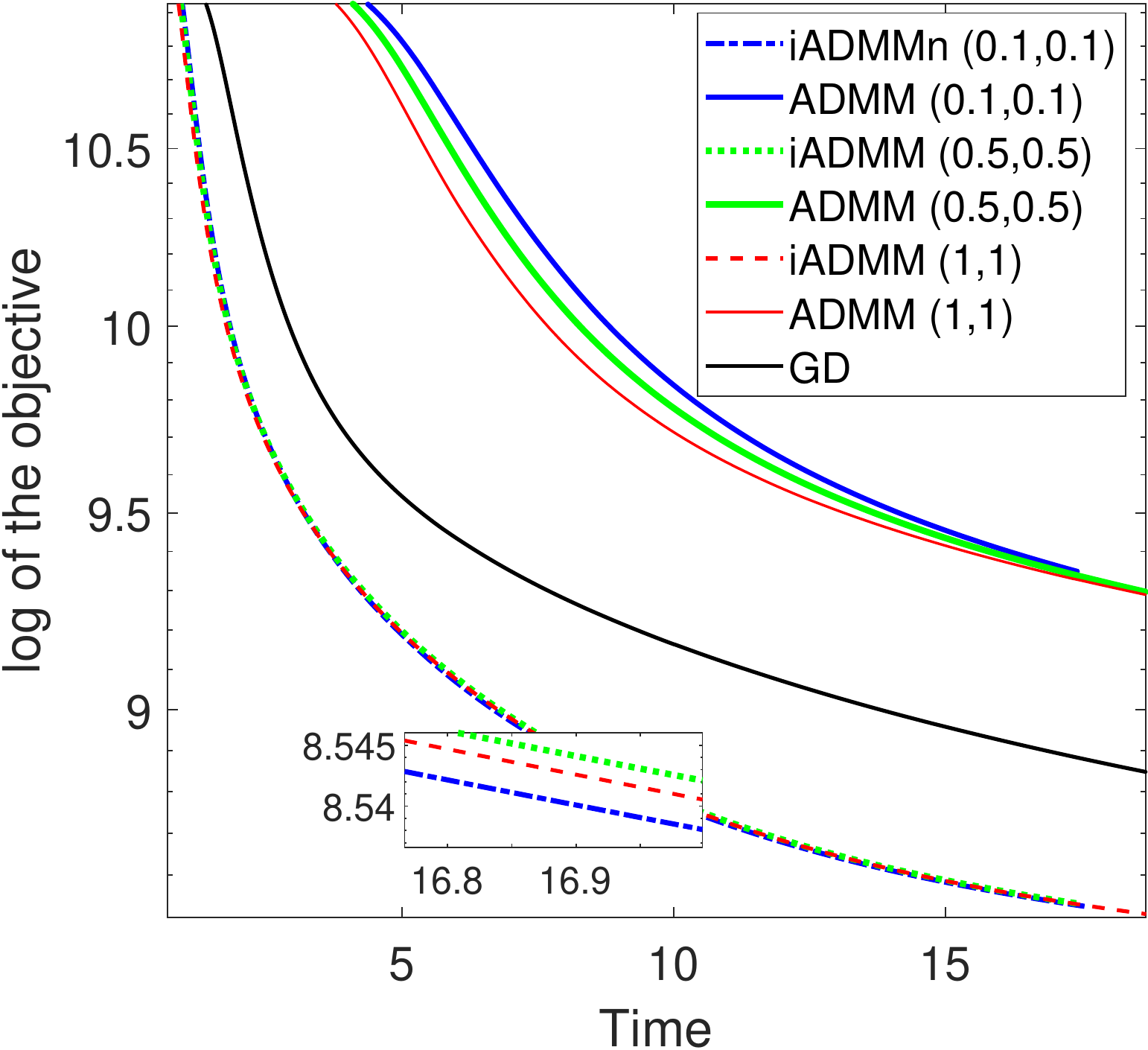}}
       
     \end{tabular}
    \caption{Evolution of the log of mean of the objective values with respect to time. }
    \label{fig:mean}
\end{figure}

\begin{table}\captionsetup{font=small}
\caption{Mean  (and std ) of the  final objective values of Problem~\eqref{log_MF}  over 25 trials. The best means are highlighted in bold. } \vspace{0.08in}
    \label{tab:report}
\centering
      \adjustbox{scale=0.9}{   \begin{tabular}{cccc}
    \toprule
    Algorithm \Big/(m,n)& (200x200) & (200,1000) & (1000,200) \\ \midrule  
    iADMMn (0.1,0.1)&$\mathbf{ 1.106\times 10^{3}}$ (2.977)& $4.807 \times 10^3$ (50.639) & $\mathbf{4.846\times 10^3}$ (68.128)\\\midrule  
    ADMMn (0.1,0.1) & 1.469 $\times 10^{3}$ (15.751) & $1.035\times 10^4$ (427.431)&$1.052\times 10^4$ ( 271.288)\\\midrule  
    iADMMn (0.5,0.5) & 1.108 $\times 10^{3}$ (5.994) &  $4.813\times 10^3$ (56.740)&$4.864\times 10^3$ (59.246)\\
              \midrule    
             ADMMn (0.5,0.5)& 1.477 $\times 10^{3}$ (28.267)& $1.020 \times 10^4$ (178.253) & $1.047\times 10^4$ (124.764)\\
               \midrule    
              iADMMn (1,1)& $1.107\times 10^{3}$ (2.430) & $ \mathbf{4.800 \times 10^3}$ ( 24.680) & $4.855\times 10^3$ (30.969)\\
               \midrule    
              ADMMn (1,1)& $1.476\times 10^{3}$ ( 11.517) & $ 1.013 \times 10^4$ ( 204.938)& $1.042\times 10^4$ (122.554)\\ \midrule
              GD & $1.144\times 10^{3}$ (3.476) & $6.491 \times 10^3$ (102.308)& $6.725 \times 10^3$ (44.983)\\
              \bottomrule
    \end{tabular}}%
       \end{table}
We observe from Figure~\ref{fig:mean} and Table \ref{tab:report} that iADMMn consistently outperforms its non-inertial version and the alternating gradient descent method. iADMMn with $\tau_1=\tau_2=0.1$ more often produces better final objective values than the other two iADMMn variants  with $\tau_1=\tau_2=0.5$ and $\tau_1=\tau_2=1$. 
}
\section{Conclusion}
\label{sec:conclusion}
We have analyzed iADMMn, an inertial  alternating direction method of multipliers for solving Problem \eqref{CP-main}.  In essence, iADMMn is an extension of iADMM proposed in \cite{Hien_iADMM}:  iADMMn allows the nonlinearity of the coupling constraint whereas iADMM only considers linear coupling constraint.  iADMMn is also an extension of \cite[Algorithm 2]{mADMM2022}:  iADMMn allows inertial term in the updating of $x_i$ whereas \cite[Algorithm 2]{mADMM2022} does not.  On the other hand, iADMMn considers a non-standard update rule for the multiplier $\omega$ when it allows a scaling factor $\tau_1\in (0,1]$ for $\omega^k$ whereas both iADMM and \cite[Algorithm 2]{mADMM2022} use standard rule $\tau_1=1$. Theorem \ref{theorem-sub} shows that every limit point of the generated sequence is an $\varepsilon$-stationary point of Problem \eqref{CP-main}. In Theorem \ref{global_conver}, $\tau_1=1$ is required  to guarantee a global convergence for the generated sequence. However, when $\tau_1\ne 1$, if there exist many limit points of the generated sequence or not is still an open question for us. We take it as a future research direction.  

\revise{\section*{Acknowledgement}
We express our sincere appreciation to the reviewers for their comments, which greatly helped improve the paper.}

\section*{Data availability Statement}
The manuscript has \revise{synthetic data}. 
\bibliography{sdADMM}

\begin{thebibliography}{10}
\providecommand{\url}[1]{{#1}}
\providecommand{\urlprefix}{URL }
\expandafter\ifx\csname urlstyle\endcsname\relax
  \providecommand{\doi}[1]{DOI~\discretionary{}{}{}#1}\else
  \providecommand{\doi}{DOI~\discretionary{}{}{}\begingroup
  \urlstyle{rm}\Url}\fi

\bibitem{ahookhosh2019multi}
Ahookhosh, M., Hien, L.T.K., Gillis, N., Patrinos, P.: Multi-block {B}regman
  proximal alternating linearized minimization and its application to sparse
  orthogonal nonnegative matrix factorization.
\newblock Computational Optimization and Applications \textbf{79}, 681--715
  (2021)

\bibitem{Attouch2009}
Attouch, H., Bolte, J.: On the convergence of the proximal algorithm for
  nonsmooth functions involving analytic features.
\newblock Mathematical Programming \textbf{116}(1), 5--16 (2009)

\bibitem{Attouch2013}
Attouch, H., Bolte, J., Svaiter, B.F.: Convergence of descent methods for
  semi-algebraic and tame problems: proximal algorithms, forward--backward
  splitting, and regularized {Gauss--Seidel} methods.
\newblock Mathematical Programming \textbf{137}(1), 91--129 (2013)

\bibitem{Beck2013}
Beck, A., Tetruashvili, L.: On the convergence of block coordinate descent type
  methods.
\newblock SIAM Journal on Optimization \textbf{23}, 2037--2060 (2013)

\bibitem{Bertsekas}
Bertsekas, D.P.: Nonlinear Programming.
\newblock Athena Scientific (2016)

\bibitem{Bolte2014}
Bolte, J., Sabach, S., Teboulle, M.: Proximal alternating linearized
  minimization for nonconvex and nonsmooth problems.
\newblock Mathematical Programming \textbf{146}(1), 459--494 (2014)

\bibitem{Bolte2018_ADMM}
Bolte, J., Sabach, S., Teboulle, M.: Nonconvex {L}agrangian-based optimization:
  Monitoring schemes and global convergence.
\newblock Mathematics of Operations Research \textbf{43}(4), 1210--1232 (2018)

\bibitem{Bot2020}
Bot, R.I., Nguyen, D.K.: The proximal alternating direction method of
  multipliers in the nonconvex setting: Convergence analysis and rates.
\newblock Mathematics of Operations Research \textbf{45}(2), 682--712 (2020)

\bibitem{Cohen2021}
Cohen, E., Hallak, N., Teboulle, M.: A dynamic alternating direction of
  multipliers for nonconvex minimization with nonlinear functional equality
  constraints.
\newblock Journal of Optimization Theory and Applications \textbf{193},
  324–353 (2022)

\bibitem{Duchi2018}
Duchi, J.C., Ruan, F.: {Solving (most) of a set of quadratic equalities:
  composite optimization for robust phase retrieval}.
\newblock Information and Inference: A Journal of the IMA \textbf{8}(3),
  471--529 (2018)

\bibitem{Dutter1981}
Dutter, R., Huber, P.J.: Numerical methods for the nonlinear robust regression
  problem.
\newblock Journal of Statistical Computation and Simulation \textbf{13}(2),
  79--113 (1981)

\bibitem{GoodBengCour16}
Goodfellow, I.J., Bengio, Y., Courville, A.: Deep Learning.
\newblock MIT Press, Cambridge, MA, USA (2016).
\newblock \url{http://www.deeplearningbook.org}

\bibitem{Hien_iADMM}
Hien, L., Phan, D., Gillis, N.: Inertial alternating direction method of
  multipliers for non-convex non-smooth optimization.
\newblock Computational Optimization and Applications \textbf{83}, 247–285
  (2022)

\bibitem{Hien_ICML2020}
Hien, L.T.K., Gillis, N., Patrinos, P.: Inertial block proximal method for
  non-convex non-smooth optimization.
\newblock In: Thirty-seventh International Conference on Machine Learning ICML
  2020 (2020)

\bibitem{mADMM2022}
Hien, L.T.K., Papadimitriou, D.: Multiblock {ADMM} for nonsmooth nonconvex
  optimization with nonlinear coupling constraints (2022).
\newblock ArXiv:2201.07657

\bibitem{Titan2020}
Hien, L.T.K., Phan, D.N., Gillis, N.: An inertial block majorization
  minimization framework for nonsmooth nonconvex optimization.
\newblock Journal of Machine Learning Research \textbf{24}(18), 1--41 (2023)

\bibitem{Hien_BMME2022}
Hien, L.T.K., Phan, D.N., Gillis, N., Ahookhosh, M., Patrinos, P.: Block
  bregman majorization minimization with extrapolation.
\newblock SIAM Journal on Mathematics of Data Science \textbf{4}(1), 1--25
  (2022)

\bibitem{HongADMM2020}
Hong, M., Chang, T.H., Wang, X., Razaviyayn, M., Ma, S., Luo, Z.Q.: A block
  successive upper-bound minimization method of multipliers for linearly
  constrained convex optimization.
\newblock Mathematics of Operations Research \textbf{45}(3), 833--861 (2020)

\bibitem{Liu2016_PLOS}
Liu, Y., Wu, M., Miao, C., Zhao, P., Li, X.L.: Neighborhood regularized
  logistic matrix factorization for drug-target interaction prediction.
\newblock PLOS Computational Biology \textbf{12}(2), 1--26 (2016)

\bibitem{Maillard2010}
Maillard, S., Roncalli, T., Teiletche, J.: The properties of equally weighted
  risk contribution portfolios.
\newblock The Journal of Portfolio Management \textbf{36}(4), 60--70 (2010)

\bibitem{Mairal_ICML13}
Mairal, J.: Optimization with first-order surrogate functions.
\newblock In: Proceedings of the 30th International Conference on International
  Conference on Machine Learning - Volume 28, ICML’13, pp. 783--791. JMLR.org
  (2013)

\bibitem{mao2017squares}
Mao, X., Li, Q., Xie, H., Lau, R.Y.K., Wang, Z., Smolley, S.P.: Least squares
  generative adversarial networks.
\newblock In: Proceedings of the IEEE International Conference on Computer
  Vision (2017)

\bibitem{melo2017}
Melo, J.G., Monteiro, R.D.C.: Iteration-complexity of a {J}acobi-type
  non-{E}uclidean {ADMM} for multi-block linearly constrained nonconvex
  programs (2017).
\newblock ArXiv:1705.07229

\bibitem{Nesterov1983}
Nesterov, Y.: A method of solving a convex programming problem with convergence
  rate {O}$(1/k^2)$.
\newblock Soviet Mathematics Doklady \textbf{27}(2) (1983)

\bibitem{Nesterov1998}
Nesterov, Y.: On an approach to the construction of optimal methods of
  minimization of smooth convex functions.
\newblock Ekonom. i. Mat. Metody \textbf{24}, 509--517 (1998)

\bibitem{Nesterov2004}
Nesterov, Y.: Introductory lectures on convex optimization: A basic course.
\newblock Kluwer Academic Publ. (2004)

\bibitem{Nesterov2005}
Nesterov, Y.: Smooth minimization of non-smooth functions.
\newblock Math. Prog. \textbf{103}(1), 127--152 (2005)

\bibitem{Ochs2019}
Ochs, P.: Unifying abstract inexact convergence theorems and block coordinate
  variable metric ipiano.
\newblock SIAM Journal on Optimization \textbf{29}(1), 541--570 (2019)

\bibitem{Ochs2014}
Ochs, P., Chen, Y., Brox, T., Pock, T.: i{P}iano: Inertial proximal algorithm
  for nonconvex optimization.
\newblock SIAM Journal on Imaging Sciences \textbf{7}(2), 1388--1419 (2014)

\bibitem{Pock2016}
Pock, T., Sabach, S.: Inertial proximal alternating linearized minimization
  (i{PALM}) for nonconvex and nonsmooth problems.
\newblock SIAM Journal on Imaging Sciences \textbf{9}(4), 1756--1787 (2016)

\bibitem{POLYAK1964}
Polyak, B.: Some methods of speeding up the convergence of iteration methods.
\newblock USSR Computational Mathematics and Mathematical Physics
  \textbf{4}(5), 1 -- 17 (1964)

\bibitem{Razaviyayn2013}
Razaviyayn, M., Hong, M., Luo, Z.: A unified convergence analysis of block
  successive minimization methods for nonsmooth optimization.
\newblock SIAM Journal on Optimization \textbf{23}(2), 1126--1153 (2013)

\bibitem{Roosta2014}
Roosta-Khorasani, F., van~den Doel, K., Ascher, U.: Stochastic algorithms for
  inverse problems involving pdes and many measurements.
\newblock SIAM Journal on Scientific Computing \textbf{36}(5), S3--S22 (2014)

\bibitem{SunSun2021}
Sun, K., Sun, A.: Dual descent {ALM} and {ADMM} (2021).
\newblock ArXiv:2109.13214

\bibitem{Tseng2009}
Tseng, P., Yun, S.: A coordinate gradient descent method for nonsmooth
  separable minimization.
\newblock Mathematical Programming \textbf{117}(1), 387--423 (2009)

\bibitem{WANG20211_admm}
Wang, J., Zhao, L.: Nonconvex generalization of alternating direction method of
  multipliers for nonlinear equality constrained problems.
\newblock Results in Control and Optimization \textbf{2}, 100,009 (2021)

\bibitem{Wang2019}
Wang, Y., Yin, W., Zeng, J.: Global convergence of {ADMM} in nonconvex
  nonsmooth optimization.
\newblock Journal of Scientific Computing \textbf{78}, 29--63 (2019)

\bibitem{Xu2013}
Xu, Y., Yin, W.: A block coordinate descent method for regularized multiconvex
  optimization with applications to nonnegative tensor factorization and
  completion.
\newblock SIAM Journal on Imaging Sciences \textbf{6}(3), 1758--1789 (2013)

\bibitem{Xu2017}
Xu, Y., Yin, W.: A globally convergent algorithm for nonconvex optimization
  based on block coordinate update.
\newblock Journal of Scientific Computing \textbf{72}(2), 700--734 (2017)

\bibitem{YANG2022110551}
Yang, Y., Jia, Q.S., Xu, Z., Guan, X., Spanos, C.J.: Proximal {ADMM} for
  nonconvex and nonsmooth optimization.
\newblock Automatica \textbf{146}, 110,551 (2022)

\bibitem{Yashitini2022}
Yashtini, M.: Convergence and rate analysis of a proximal linearized {ADMM} for
  nonconvex nonsmooth optimization.
\newblock Journal of Global Optimization \textbf{84}, 913–939 (2022)

\bibitem{Zavriev1993}
Zavriev, S., Kostyuk, F.: Heavy-ball method in nonconvex optimization problems.
\newblock Computational Mathematics and Modeling \textbf{4}, 336--341 (1993)

\end{thebibliography}
\bibliographystyle{spmpsci} 
\end{document}